\newtheorem{Thm}{Theorem}[section]
\newtheorem{Cor}[Thm]{Corollary}
\newtheorem{Lem}[Thm]{Lemma}
\theoremstyle{definition}
\theoremstyle{remark}
\def\C{{\mathbb C}}
\def\F{{\mathbb F}}
\def\P{{\mathbb P}}
\def\Q{{\mathbb Q}}
\begin{document}
\title[Modular curves with infinitely many cubic points]{Modular curves with infinitely many\\ cubic points}
\author[D. Jeon]{Daeyeol Jeon}
\keywords{Modular curve, trigonal, trielliptic, cubic point}
\address{Daeyeol Jeon, Department of
Mathematics education, Kongju National University, 56
Gongjudaehak-ro, Gongju-si, Chungcheongnam-do 314-701, South Korea}
\email{dyjeon@kongju.ac.kr}
\thanks{{\it 2010 Mathematics Subject Classification.} 11G18,
11G30}
\thanks{This research was supported by Basic Science Research Program through the National Research Foundation of Korea (NRF) funded by the Ministry of Education (2014R1A1A2056390).}

\begin{abstract}
In this study, we determine all modular curves $X_0(N)$ that admit infinitely many cubic points.
\end{abstract}

\maketitle \setcounter{section}{-1}
\section{Introduction}


A curve $X$ defined over an algebraically closed field $k$ is called {\it $d$-gonal} if it admits a map $\phi:X\to \P^1$ over $k$ of degree $d.$ 
The smallest possible value of $d$ is called the {\it gonality} of $X$, and is denoted by ${\rm Gon}(X)$.
If a curve $X$ is two-gonal and of genus $g(X)> 1$, then $X$ is said to be {\it hyperelliptic}. If a curve $X$ is three-gonal, then we call $X$ {\it trigonal}. 

Frey \cite{Fr} proved that if a curve $X$ over a number field $K$ has infinitely many points $P$ such that $[K(P):K]\leq d$, then ${\rm Gon}(X)\leq 2d$.

On the other hand, Abramovich and Harris \cite{A-H} conjectured that a curve $X$ defined over $K$ has infinitely many points $P$ satisfying $[K(P):K]\leq d$ if and only if it admits a $K$-rational map of degree not more than $d$ onto $\P^1$ or an elliptic curve with positive $K$-rank. This was conjectured when the authors were considering the questions raised by Harris and Silverman in \cite{H-S}. 
Indeed, they proved that this conjecture holds for $d=2$ or $3$, and for $d=4$ when $g(X)\neq 7$. 
However, this conjecture is false in general, as shown in \cite{D-F}.

For a positive integer $N$, consider the congruence subgroup $$\Gamma_0(N):=\left\{\begin{pmatrix}
a&b\\c&d\end{pmatrix} \in\operatorname{SL}_2(\mathbb Z)\,|\,c \equiv0\mod N \right\}.$$ 
Let $X_0(N)$ denote the modular curve corresponding to $\Gamma_0(N)$, and let $g_0(N)$ denote its genus. 
The modular curve $X_0(N)$(with cusps removed) parametrizes isomorphism classes of elliptic curves with cyclic $N$-isogenies.


Ogg \cite{O} determined all hyperelliptic curves $X_0(N)$.
Bars \cite{B} determined all bielliptic curves $X_0(N)$, and by combining his results with Ogg's, he found all of the curves $X_0(N)$ that contain infinitely many points defined over quadratic number fields.
Here, a {\it bielliptic curve} means a curve of genus greater than one that admits a map of degree two onto an elliptic curve.





Hasegawa and Shimura \cite{Ha-Sh} determined all trigonal curves $X_0(N)$.
In this paper, by using their results, we shall determine all curves $X_0(N)$ that have infinitely many cubic points. 
A {\it cubic point} is a point that can be defined over a cubic number field.
Our main result is as follows.

\begin{Thm}\label{main}
A curve $X_0(N)$ has infinitely many cubic points if and only if
$$N\in\{1-29,31,32,34,36,37,43,45,49,50,54,64,81\}.$$
\end{Thm}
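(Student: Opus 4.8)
My plan is to derive Theorem~\ref{main} from the Frey--Abramovich--Harris structure theorem for points of degree $\le 3$, combined with the existing classifications of hyperelliptic (Ogg), bielliptic (Bars), and trigonal (Hasegawa--Shimura) curves $X_0(N)$ and the tables of genera $g_0(N)$. The engine used in both directions is the observation that if $X/\Q$ admits a non-constant $\Q$-rational morphism $\phi\colon X\to Y$ of degree exactly $3$, where $Y=\P^1$ or $Y$ is an elliptic curve of positive $\Q$-rank, then $X$ has infinitely many cubic points: $\Q(X)/\Q(Y)$ has prime degree $3$ and hence no proper intermediate field, so its Galois group is $C_3$ or $S_3$ acting transitively on the three embeddings, and by Hilbert irreducibility the fibre $\phi^{-1}(P)$ is an irreducible $\Q$-scheme of degree $3$ --- a single point with cubic residue field --- for every $P\in Y(\Q)$ outside a thin set; since $Y(\Q)$ is not thin in itself and distinct $P$ give distinct cubic points, there are infinitely many of them.

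For the ``if'' direction it suffices to exhibit such a $\phi$ with $Y=\P^1$ for each $N$ in the list. If $g_0(N)=0$, then $X_0(N)\cong\P^1$ and $t\mapsto t^3$ works. If $g_0(N)=1$, then $X_0(N)$ is an elliptic curve over $\Q$ with origin the rational cusp, and any function with a pole of exact order $3$ there gives $\phi$. If $g_0(N)=2$, then in each of the eight cases $X_0(N)$ carries a base-point-free $\Q$-rational $g^1_3$ --- equivalently a class in $\operatorname{Pic}^3(X_0(N))(\Q)$ not of the form $[K_{X_0(N)}]+[Q]$ with $Q$ a rational point, which exists because $J_0(N)(\Q)$ is strictly larger than the finite set $X_0(N)(\Q)$ --- and this yields $\phi$. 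Finally, the remaining cases $N\in\{34,43,45,54,64,81\}$ are exactly the members of the list with $g_0(N)\ge 3$, and for these the Hasegawa--Shimura classification gives that $X_0(N)$ is trigonal over $\Q$, hence $\phi$ exists (for the four genus-$3$ curves this is immediate, since they are non-hyperelliptic plane quartics with a rational cusp and one projects from it).

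For the ``only if'' direction, if $X_0(N)$ has infinitely many cubic points it has infinitely many points of degree $\le 3$, so by the Abramovich--Harris theorem (valid for $d=3$) there is a $\Q$-rational map of degree $\le 3$ from $X_0(N)$ onto $\P^1$ or onto an elliptic curve of positive $\Q$-rank. If the target is $\P^1$, this forces ${\rm Gon}_{\overline{\Q}}(X_0(N))\le 3$, so $X_0(N)$ is rational, of genus $1$, hyperelliptic, or trigonal, and the genus tables together with Ogg's and Hasegawa--Shimura's lists confine $N$ to an explicit finite set. If the target is an elliptic curve, degree $1$ is impossible (every genus-$1$ curve $X_0(N)$ has rank $0$), degree $2$ puts $X_0(N)$ on Bars' bielliptic list, and degree $3$ gives ${\rm Gon}(X_0(N))\le 6$ on composing with the degree-$2$ map of the target onto $\P^1$, hence again finitely many $N$ by Abramovich's gonality bound; in every case $N$ lies in an explicit finite set. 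It then remains to decide, inside this finite set, which $X_0(N)$ genuinely have infinitely many points of degree \emph{exactly} $3$: those with $g_0(N)\le 2$ do, by the sufficiency argument, while for a candidate with $g_0(N)\ge 3$ I would look for a degree-$3$ $\Q$-rational morphism onto $\P^1$ or a positive-rank elliptic curve. A hyperelliptic curve of genus $\ge 3$ has no base-point-free $g^1_3$ --- every $g^1_3$ on it is the hyperelliptic pencil together with a base point --- and so no degree-$3$ map to $\P^1$; and one checks, case by case using the ranks of the elliptic quotients of $J_0(N)$, that none of the remaining candidates is trielliptic over $\Q$ onto a positive-rank elliptic curve. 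Hence the candidates with $g_0(N)\ge 3$ that survive are precisely those trigonal over $\Q$, namely $N\in\{34,43,45,54,64,81\}$, and intersecting with the finite set yields exactly $N\in\{1-29,31,32,34,36,37,43,45,49,50,54,64,81\}$.

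The step I expect to be the main obstacle is this last one: the Abramovich--Harris dichotomy also admits hyperelliptic curves of genus $\ge 3$ and bielliptic curves with a positive-rank elliptic quotient --- both of which acquire infinitely many points of degree $\le 2$ but not of degree exactly $3$ --- so these have to be excluded by hand, which is why one needs both the structural fact that genus-$\ge 3$ hyperelliptic curves carry no degree-$3$ morphism to $\P^1$ and an explicit curve-by-curve verification that no relevant $X_0(N)$ is trielliptic over $\Q$ onto a positive-rank elliptic curve. Two smaller points also require care: checking that the trigonal structures on the genus-$4$ curves $X_0(54)$ and $X_0(81)$ are defined over $\Q$ rather than only over $\overline{\Q}$, and controlling $J_0(N)(\Q)$ in the genus-$2$ cases to guarantee the base-point-free $g^1_3$.
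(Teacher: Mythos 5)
Your ``if'' direction is essentially the paper's: every listed $N$ is handled by producing a degree-$3$ map to $\P^1$ over $\Q$, and your Hilbert-irreducibility remark usefully makes explicit why such a map yields points of degree exactly $3$. Your genus-$2$ argument (a rational degree-$3$ class not of the form $[K]+[Q]$ with $Q$ rational) is workable, but the needed inequality $|J_0(N)(\Q)|>|X_0(N)(\Q)|$ is merely asserted and still requires a check for each of the eight genus-$2$ levels; the paper instead gets $\Q$-trigonality from three rational cusps/points ($N=22,26,28,50,37$) and from the hyperelliptic involution swapping the two rational cusps ($N=23,29,31$). Replacing the paper's bound for trielliptic levels by Abramovich's analytic gonality bound is also legitimate in principle, though it enlarges the finite set to be examined.

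The genuine gap is in the ``only if'' direction, at exactly the place you flag as the ``main obstacle,'' and your proposed fix does not close it. You reduce to the criterion ``infinitely many cubic points $\Rightarrow$ a degree-$3$ $\Q$-map onto $\P^1$ or onto a positive-rank elliptic curve,'' but Abramovich--Harris only supplies a map of degree $\le 3$, and for the problematic levels (hyperelliptic of genus $\ge 3$, e.g. $N=30,\dots,71$, and bielliptic with a rank-positive degree-$2$ quotient, $N=53,61,65,79,83,89,101,131$) that map has degree $2$ and only accounts for quadratic points; moreover $W_3$ of such a curve genuinely contains translates of positive-rank elliptic curves (a shift of $W_2$ by a rational point), so ``no degree-$3$ map exists'' does not by itself bound the cubic points. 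Establishing the criterion is the hardest part of the paper: Faltings' theorem on rational points of subvarieties of $J_0(N)$ plus the Debarre--Fahlaoui bound to get an elliptic translate inside $W_3$, the $\Gamma_Q(N)$/$U(N)$ analysis in the hyperelliptic case (where $X_0^{(3)}(N)\to{\rm Pic}^3$ is not injective because of the $g^1_2$), and the minimality argument needed to apply Abramovich--Harris' Lemma 2 in the bielliptic case; none of this appears in your proposal. Separately, your ``curve-by-curve verification'' of non-triellipticity ``using the ranks of the elliptic quotients of $J_0(N)$'' would fail as stated: for instance $J_0(86)$, $J_0(122)$, $J_0(158)$, $J_0(159)$, $J_0(215)$ all have positive-rank elliptic quotients (43A1, 61A1, 79A1, 53A1, 43A1), so rank data cannot exclude a degree-$3$ map; the paper needs divisibility by the degree of the strong Weil parametrization, the inequality $|X_0(N)_p(\F_{p^2})|\le 3(p+1)^2$ against Ogg's lower bound, explicit zeta-function point counts for $X_0(86)$ and $X_0(122)$, and Castelnuovo's inequality for $X_0(158)$ and $X_0(159)$. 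As written, the only-if direction is therefore a plan rather than a proof.
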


The following result follows directly from our main theorem.

\begin{Cor} If $K$ varies over all cubic number fields and $E$ varies over all elliptic curves over $K$, there exist infinitely many absolutely non-isomorphic elliptic curves $E$ defined over $K$ with $K$-rational cyclic $N$-isogenies if and only if $N$ is one of the numbers stated in Theorem \ref{main}. Here absolutely non-isomorphic means non-isomorphic over the algebraic closure, in other words, it means having different $j$-invariants.
\end{Cor}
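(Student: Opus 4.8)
The plan is to deduce the Corollary directly from Theorem~\ref{main} by means of the moduli interpretation of $X_0(N)$ recalled in the introduction, so that the only work is to translate between cubic points on the curve and elliptic curves over cubic fields. Throughout, write $Y_0(N)=X_0(N)\setminus\{\text{cusps}\}$ for the open modular curve, which is the coarse moduli space of pairs $(E,C)$ consisting of an elliptic curve $E$ together with a cyclic subgroup $C$ of order $N$ (equivalently, a cyclic $N$-isogeny with kernel $C$). A cubic point on $X_0(N)$ is by definition a point defined over some cubic number field $K$, so the Corollary amounts to comparing the set of cubic points of $X_0(N)$ with the set of pairs $(E,C)$ defined over cubic fields, organized according to their $j$-invariant.

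First I would record the dictionary. A pair $(E,C)$ defined over a cubic field $K$, i.e.\ an elliptic curve $E/K$ with a $K$-rational cyclic $N$-isogeny, determines a $K$-rational, hence cubic, point of $Y_0(N)$. In the other direction, a non-cuspidal cubic point $P$ lies over a $K$-rational point for some cubic $K$ and corresponds to a $\bar K$-isomorphism class of pairs whose field of moduli is contained in $K$; since a generic elliptic curve has automorphism group $\{\pm1\}$, the field of moduli is a field of definition whenever $j(E)\neq 0,1728$, and in that case $P$ comes from an honest pair $(E,C)$ over $K$ with $j(E)\in K$. The second ingredient is the finiteness of the fibers of the forgetful map $j\colon X_0(N)\to X(1)\cong\P^1$, $(E,C)\mapsto j(E)$, which has degree $[\operatorname{SL}_2(\Z):\Gamma_0(N)]$; thus each value of $j$ is attained by only finitely many points. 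Since, as the statement notes, two elliptic curves are absolutely isomorphic precisely when they have equal $j$-invariants, ``infinitely many absolutely non-isomorphic curves'' is synonymous with ``infinitely many distinct values of $j$''.

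With this in hand both implications are immediate. If $X_0(N)$ has infinitely many cubic points, then after deleting the finitely many cusps and the finitely many points over $j=0$ and $j=1728$ one still has infinitely many cubic points, each arising from an elliptic curve over a cubic field with a rational cyclic $N$-isogeny; by finiteness of the $j$-fibers these realize infinitely many distinct $j$-invariants, giving infinitely many absolutely non-isomorphic such curves. Conversely, infinitely many absolutely non-isomorphic curves $E$ over cubic fields carrying rational cyclic $N$-isogenies yield cubic points of $Y_0(N)$ with infinitely many distinct $j$-invariants, hence infinitely many distinct cubic points on $X_0(N)$. In both directions the constraint on $N$ is then exactly the content of Theorem~\ref{main}.

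The only step requiring genuine care is the passage from a cubic point of $X_0(N)$ to an elliptic curve actually defined over the cubic field, where the field of moduli need not in general be a field of definition; I expect this to be harmless because the obstruction is concentrated at the elliptic points $j=0,1728$, which account for only finitely many points and may be discarded without disturbing the infinitude.
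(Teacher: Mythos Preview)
Your argument is correct and is exactly the standard moduli-theoretic translation the paper has in mind: the paper gives no separate proof of the Corollary, stating only that it ``follows directly'' from Theorem~\ref{main} via the moduli interpretation of $X_0(N)$ recalled in the introduction. You have simply (and carefully) spelled out that dictionary, including the harmless exclusion of cusps and elliptic points, so there is nothing to add.
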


\section{Preliminaries}

Let $X$ be a smooth projective curve defined over a number field $K$.
For any non-negative integer $d$, let ${\rm Pic}^d(X)$ be the scheme parametrizing isomorphism classes of line bundles of degree $d$ on $X$, and let $J(X)$ be the Jacobian of $X$, which is equal to ${\rm Pic}^0(X)$.
For any point $x\in {\rm Pic}^d(X)$, we write $L_x$ for a line bundle of degree $d$ on $X$ associated to $x$.
For any non-negative integer $r$, we set $W_d^r(X)=\{x\in{\rm Pic}^d(X)\,|\,h^0(X,L_x)>r\}$, endowed with its usual scheme structure, and we write $W_d(X):=W_d^0(X)$. 

Suppose that $X$ admits a $K$-rational point.
We note that $X_0(N)$ always has a $\Q$-rational point.
We say that a point $P$ of $X$ has {\it degree $d$ over $K$} if $[K(P):K]=d$.
Let $X^{(d)}$ be the $d$-th symmetric product of $X$.
Note that any point of $X$ with degree $\leq d$ over $K$ gives rise to a $K$-rational point of $X^{(d)}$ (cf. \cite{Fr}).
Thus the set of $K$-rational points $X^{(d)}(K)$ is infinite if and only if $X$ has infinitely many points of degree $\leq d$ over K.

Suppose that $X$ admits no maps of degree at most $d$ to $\P^1$ over $K$.
According to Proposition 1 of \cite{Fr}, $X^{(d)}(K)$ can be embedded in ${\rm Pic}^d(X)$ as $W_d(X)(K)$.
If $W_d(X)$ contains no translates of abelian subvarieties of ${\rm Pic}^d(X)$, then $X^{(d)}(K)$ is finite as a result of Faltings' Theorem in \cite{Fa1}.

Conversely, if $X$ admits a $K$-rational map of degree $d$ onto $\P^1$, then $X^{(d)}(K)$ is infinite.

For the dimension of abelian varieties contained in $W_d(X)$, we refer the following theorem.

\begin{Thm}\cite{D-F}\label{DF}
Let $X$ be a smooth curve of genus $g$ such that $W_d^r(X)$ contains an abelian variety $A$, and assume that $d\leq g-1+r$. Then, $\dim(A)\leq \frac{d}{2}-r$.
\end{Thm}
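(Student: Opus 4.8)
The plan is to prove a relative form of Clifford's inequality. Translating by a fixed divisor class, we may assume that $A=\alpha+A_{0}$ for an abelian subvariety $A_{0}\subseteq J(X)$ and some $\alpha\in\operatorname{Pic}^{d}(X)$. Let $u_{d}\colon X^{(d)}\to\operatorname{Pic}^{d}(X)$ be the Abel--Jacobi map and set $F:=u_{d}^{-1}(A)\subseteq X^{(d)}$. Since every point of $A$ lies in $W_{d}^{r}(X)$, the map $u_{d}|_{F}\colon F\to A$ is surjective with generic fibre $|L_{x}|$ of dimension $\rho:=h^{0}(L_{x})-1\ge r$, so $\dim F\ge\dim A_{0}+\rho$; hence $\dim A_{0}\le \dim F-\rho\le \dim F-r$, and it suffices to prove the sharper estimate $\dim F\le d/2$.

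First I would extract the arithmetic of the hypothesis $d\le g-1+r$: for every $x\in A$, Riemann--Roch gives $h^{1}(L_{x})=h^{0}(L_{x})-d+g-1\ge(r+1)-d+g-1\ge1$, so $L_{x}$ is special and $\omega_{X}\otimes L_{x}^{-1}$ is effective. Residuation against the canonical class is then available. Consider the incidence variety $\Sigma:=\{(D,D^{\ast})\in X^{(d)}\times X^{(2g-2-d)}\mid [D]\in A,\ D+D^{\ast}\in|\omega_{X}|\}$: the addition morphism $\Sigma\to|\omega_{X}|\cong\mathbb{P}^{g-1}$ has finite fibres, because the sub-divisors of a fixed effective divisor on a curve form a finite set, so every component of $\Sigma$ has dimension $\le g-1$. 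Computing $\dim\Sigma$ through its two projections and using Riemann--Roch then produces a first, Clifford-type, estimate on $\dim F$ (for $A_{0}=\{0\}$ this is exactly the usual proof of Clifford's theorem).

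That estimate turns out to be off from $d/2$ by a factor of two, and closing the gap is where the abelian structure of $A_{0}$ must be used decisively: for every $k\ge1$ the $k$-fold sum $A+\dots+A$ is again a \emph{single} translate of $A_{0}$, of dimension $\dim A_{0}$ rather than $k\dim A_{0}$. I would therefore also consider the iterated sums $F^{+k}:=F+\dots+F\subseteq X^{(kd)}$: the addition map $F^{\times k}\to X^{(kd)}$ again has finite fibres by the same sub-divisor finiteness, so $\dim F^{+k}=k\dim F$, while $F^{+k}\subseteq u_{kd}^{-1}(k\alpha+A_{0})$, whose dimension is $\dim A_{0}$ plus the generic value of $h^{0}-1$ on that coset. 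A super-additivity estimate for sections of line bundles, together with Clifford's inequality applied on the coset $k\alpha+A_{0}$ (valid as long as the degrees stay $\le 2g-2$, by the Riemann--Roch argument above), should, for a suitable number of summands dictated by the numerical hypotheses, force $k\dim F\le \dim A_{0}+kd/2$ and hence $\dim F\le d/2$. Preliminarily one reduces, by stripping off any divisor of $X$ contained in every member of $F$, to the case where $F$ has no such common part; this only lowers $d$, so it is harmless, and it is what makes the linear systems $|L_{x}|$ base-point-free on a dense open of $A$.

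The step I expect to be the genuine obstacle is precisely this last one: controlling the interaction between the linear-series contribution $\rho$ (governed by Clifford applied to a single $L_{x}$) and the "excess" produced by the positive dimension of $A_{0}$ (governed by the addition and super-additivity estimates), and doing so uniformly. In particular one must deal with the borderline situation where $kd$ grows past $2g-2$ and Clifford is no longer available — equivalently, one must identify the correct choice of $k$, or pass to the quotient $B:=J(X)/A_{0}$ and analyse the fibre of the sum map $X^{(d)}\to B$ directly, distinguishing the case where the induced map $X\to B$ is birational onto its image (which forces $\dim A_{0}=0$) from the case where it factors through a covering of degree $\ge 2$ (which is exactly the source of the factor of two, and the equality case of the theorem). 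By contrast, the residuation and the finiteness of sub-divisor sets are entirely routine once set up.
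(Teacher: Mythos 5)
This statement is not proved in the paper at all: it is quoted verbatim from Debarre--Fahlaoui \cite{D-F}, so there is no internal argument to compare yours with, and your proposal has to be judged as a proof of the Debarre--Fahlaoui theorem itself. Your preliminary reductions are correct and are in the spirit of the actual argument in \cite{D-F}: writing $A=\alpha+A_0$, fibering $F=u_d^{-1}(A)$ over $A$ to get $\dim A_0\le \dim F-r$, using $d\le g-1+r$ to force $h^1(L_x)\ge 1$, the finiteness of the fibres of the addition maps $F^{\times k}\to X^{(kd)}$, and the observation that $A+\cdots+A$ is a single translate of $A_0$ inside $W_{kd}^{kr}$ (via $W_a^b+W_c^e\subseteq W_{a+c}^{b+e}$) are all sound, and your diagnosis that the naive residuation argument only gives $\dim A_0\le d-2r$ is accurate.

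The genuine gap is exactly the step you yourself flag, and as sketched it cannot be closed. The iterated-sum inequality gives at best $k\dim F\le \dim A_0+\max_{y\in k\alpha+A_0}\bigl(h^0(L_y)-1\bigr)$, and to bound the right-hand side by $\dim A_0+kd/2$ you need Clifford on the \emph{whole} coset $k\alpha+A_0$; since points there satisfy $h^0\ge kr+1$, speciality requires $k(d-r)\le g-1$, not merely $d-r\le g-1$, so $k$ is bounded. With $k$ bounded, combining $k\dim F\le \dim A_0+kd/2$ with $\dim F\ge \dim A_0+r$ only yields $\dim A_0\le \frac{k}{k-1}\bigl(\tfrac d2-r\bigr)$, strictly weaker than the claimed $\dim A_0\le \tfrac d2-r$; and once $kd$ leaves the special range the generic bound degrades to $h^0-1=kd-g$, which destroys the estimate altogether (you must also take the maximum, not the generic value, of $h^0-1$ on the coset, since $h^0$ is only upper semicontinuous). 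So the decisive ingredient of Debarre--Fahlaoui --- the finer analysis that replaces this limiting argument, in their case the study of the composition of $X\to J(X)\to J(X)/A_0$ and of the covering structure it induces (which is also what governs the equality case) --- is missing, and without it the proposal is a plausible strategy outline rather than a proof.
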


Now, we consider the case with $d=3$. 
By the above theorem, the only non-zero abelian varieties that can be contained in $W_3(X)$ are elliptic curves.

When dealing with an individual curve, the following facts are
useful.

\begin{Thm}\label{castelnuovo}{\rm{\textbf{(Castelnuovo's Inequality)}}}
Let $F$ be a function field with perfect constant field $k$.
Suppose that there are two function subfields $F_1$ and $F_2$ with constant field $k$ satisfying
\begin{enumerate}
\item[(1)] $F=F_1F_2$ is the compositum of $F_1$ and $F_2.$
\item[(2)] $[F:F_i]=n_i,$ and $F_i$ has genus $g_i$ $(i=1,2).$
\end{enumerate}
Then, the genus $g$ of $F$ is bounded by
$$g\leq n_1g_1+n_2g_2+(n_1-1)(n_2-1).$$
\end{Thm}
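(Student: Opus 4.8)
The plan is to realize $X$ inside the product surface $S=X_1\times X_2$ and read the genus bound off from the adjunction formula together with the Hodge index theorem. Since the genus and the degrees $n_i$ are preserved under base change along $k\hookrightarrow\bar k$ (the hypothesis that $k$ is the common constant field makes all three curves geometrically integral, so $F\otimes_k\bar k$, $F_1\otimes_k\bar k$, $F_2\otimes_k\bar k$ remain function fields with the same degrees and genera), I may assume $k=\bar k$. Let $X,X_1,X_2$ be the smooth projective curves with function fields $F,F_1,F_2$, and let $\pi_i\colon X\to X_i$ be the degree-$n_i$ morphisms induced by $F_i\hookrightarrow F$. Because $F=F_1F_2$, the product morphism $\phi=(\pi_1,\pi_2)\colon X\to S$ is birational onto its image $C:=\phi(X)$, an integral curve on $S$; thus $X$ is the normalization of $C$, and $g=g(X)=p_a(C)-\delta\le p_a(C)$, where $\delta\ge 0$ is the total $\delta$-invariant of the singularities of $C$. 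It therefore suffices to bound the arithmetic genus $p_a(C)$.

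Next I would compute the relevant intersection numbers on $S$. Writing $A$ and $B$ for the classes of the two rulings $\{pt\}\times X_2$ and $X_1\times\{pt\}$, one has the hyperbolic relations $A^2=B^2=0$ and $A\cdot B=1$. Counting points in fibers gives $C\cdot A=\deg\pi_1=n_1$ and $C\cdot B=\deg\pi_2=n_2$, while the canonical class is $K_S\equiv(2g_1-2)A+(2g_2-2)B$, so that $C\cdot K_S=(2g_1-2)n_1+(2g_2-2)n_2$. The adjunction formula $2p_a(C)-2=C^2+C\cdot K_S$ then reduces the entire statement to the single inequality $C^2\le 2n_1n_2$: granting it, a direct substitution yields $p_a(C)\le n_1g_1+n_2g_2+(n_1-1)(n_2-1)$, which is exactly the asserted bound.

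The crux, and the step I expect to be the main obstacle, is the inequality $C^2\le 2n_1n_2$, since $\operatorname{Num}(S)$ can be strictly larger than the span $\langle A,B\rangle$ precisely when $X_1$ and $X_2$ carry nontrivial correspondences (for instance when they are isogenous), so one cannot simply assert $C\equiv n_2A+n_1B$. I would control this by the Hodge index theorem. The plane $V=\langle A,B\rangle_{\mathbb R}$ carries a nondegenerate form of signature $(1,1)$, whence $\operatorname{Num}(S)_{\mathbb R}=V\perp V^{\perp}$; as the full intersection form has signature $(1,\rho-1)$, the complement $V^{\perp}$ is negative definite. Decomposing $C=C_0+C_1$ with $C_0\in V$ and $C_1\in V^{\perp}$, the constraints $C\cdot A=n_1$ and $C\cdot B=n_2$ force $C_0=n_2A+n_1B$, so that $C^2=C_0^2+C_1^2=2n_1n_2+C_1^2\le 2n_1n_2$, completing the argument. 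I note in passing that a purely function-field proof in the style of Stichtenoth is also possible, but one must avoid the tempting multiplication map $L(A_1)\otimes_k L(A_2)\to L(\pi_1^{*}A_1+\pi_2^{*}A_2)$: it is \emph{not} injective here, since $F_1$ and $F_2$ share their transcendence degree and hence fail to be linearly disjoint over $k$. The surface-theoretic route sidesteps this difficulty cleanly, which is why I would take it.
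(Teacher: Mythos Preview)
The paper does not prove this theorem at all: Castelnuovo's inequality is quoted as a classical black box and invoked later (for $X_0(158)$ and $X_0(159)$) without any argument. So there is no ``paper's own proof'' to compare against.

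That said, your argument is correct and is in fact the standard modern proof. The reduction to $k=\bar k$ is justified exactly as you say (perfect constant field $\Rightarrow$ geometrically integral, so genera and degrees are stable under base change). The birationality of $\phi=(\pi_1,\pi_2)$ onto its image follows from $F=F_1F_2$, the adjunction computation $2p_a(C)-2=C^2+C\cdot K_S$ with $K_S\equiv(2g_1-2)A+(2g_2-2)B$ is right, and your Hodge-index step is the clean way to get $C^2\le 2n_1n_2$ without assuming $\operatorname{Num}(S)=\Z A\oplus\Z B$. Your closing remark is also on point: the alternative, purely function-field proof (as in Stichtenoth, \emph{Algebraic Function Fields and Codes}, Theorem~3.11.3) proceeds via Riemann--Roch and a careful count of places and gaps rather than a naive tensor-product injection, and your surface-theoretic route is indeed the quickest way to avoid that bookkeeping.
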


We make use of Ogg's result \cite{O} concerning a lower bound on the number of $\F_{p^2}$-rational points on the reduction $X_0(N)_{p}$ of $X_0(N)$ modulo $p$, where $p$ is a prime with $p\nmid N$.
Hasegawa and Shimura \cite{Ha-Sh} formulate Ogg's result as follows:

\begin{Lem}\label{Ogg}\cite[Lemma 3.1]{Ha-Sh} For a prime $p$ with $p\nmid N$, let $|X_0(N)_{p}(\F_{p^2})|$ denote the number of $\F_{p^2}$-rational points on $X_0(N)_{p}$.
Then, $$|X_0(N)_{p}(\F_{p^2})|\geq L_p(N):=\frac{p-1}{12}\varphi(N)+2^{\omega(N)},$$
where $\psi(N)=N\displaystyle\prod_{{r|N}\atop{r\,{\rm prime}}}\left(1+\frac{1}{r}\right)$ and $\omega(N)$ is the number of prime factors of $N$.
\end{Lem}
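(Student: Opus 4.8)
The plan is to produce the lower bound by exhibiting two disjoint families of $\F_{p^2}$-rational points on $X_0(N)_p$: the supersingular points and the rational cusps, counting each family from below. Since $p\nmid N$, the curve $X_0(N)$ has good reduction at $p$ (Igusa, Deligne--Rapoport), so $X_0(N)_p$ is a smooth projective curve over $\F_p$, the reduction map carries cusps to cusps, and every non-cuspidal $\F_{p^2}$-point corresponds to an isomorphism class of pairs $(E,C)$ defined over $\F_{p^2}$, where $E$ is an elliptic curve and $C\subset E$ a cyclic subgroup of order $N$ stable under $\mathrm{Frob}_{p^2}$. A supersingular point is non-cuspidal, so the two families are disjoint and their contributions may simply be added. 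Throughout I will write $\psi(N)=N\prod_{r\mid N}(1+1/r)$ for the index $[\mathrm{SL}_2(\Z):\Gamma_0(N)]$, which is both the degree of $X_0(N)\to X_0(1)$ and the number of cyclic subgroups of order $N$ in $(\Z/N\Z)^2$.

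First I would treat the supersingular points. Every supersingular $j$-invariant in characteristic $p$ lies in $\F_{p^2}$, and for each one the theory of Deuring and Waterhouse provides a model $E$ over $\F_{p^2}$ whose $p^2$-power Frobenius $\pi_{p^2}$ equals multiplication by $-p$. Since $\pi_{p^2}=[-p]$ is a scalar, it fixes every subgroup of $E$; hence each of the $\psi(N)$ cyclic subgroups of order $N$ in $E[N]\cong(\Z/N\Z)^2$ yields an $\F_{p^2}$-rational point of $X_0(N)_p$. The number of distinct supersingular points then equals the sum over supersingular isomorphism classes $E$ of the number of $\mathrm{Aut}(E)$-orbits of such subgroups.

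Because $[-1]$ acts trivially on subgroups, Burnside's formula shows this orbit count is at least $\psi(N)\cdot 2/|\mathrm{Aut}(E)|$. Summing and invoking the Eichler mass formula $\sum_{E\ \mathrm{ss}}1/|\mathrm{Aut}(E)|=(p-1)/24$ gives
$$\#\{\text{supersingular points of } X_0(N)_p(\F_{p^2})\}\;\geq\;\psi(N)\sum_{E\ \mathrm{ss}}\frac{2}{|\mathrm{Aut}(E)|}\;=\;\frac{p-1}{12}\,\psi(N).$$
The inequality here, rather than equality, is precisely what is forced by the exceptional curves $j=0,1728$ with extra automorphisms, and securing the correct direction of this estimate is the main obstacle of the argument.

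Finally I would count the rational cusps. The cusps of $X_0(N)$ are represented by fractions $a/c$ with $c\mid N$, and Galois permutes them through $(\Z/\gcd(c,N/c)\Z)^*$; thus a cusp is $\Q$-rational whenever $\gcd(c,N/c)\in\{1,2\}$. Deciding, for each prime power dividing $N$, whether it divides $c$ or $N/c$ produces exactly $2^{\omega(N)}$ divisors $c$ with $\gcd(c,N/c)=1$, hence at least $2^{\omega(N)}$ distinct $\Q$-rational cusps; under good reduction these reduce to distinct $\F_p$-rational, a fortiori $\F_{p^2}$-rational, points of $X_0(N)_p$. Adding the two disjoint contributions yields
$$|X_0(N)_p(\F_{p^2})|\;\geq\;\frac{p-1}{12}\,\psi(N)+2^{\omega(N)}\;=\;L_p(N),$$
as claimed. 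Beyond the automorphism bookkeeping above, the only other delicate input is the existence of the Frobenius-$(-p)$ twist making every supersingular subgroup automatically $\F_{p^2}$-rational; the cusp count and the disjointness are then routine.
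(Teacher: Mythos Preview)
The paper does not supply a proof of this lemma at all: it is quoted verbatim from Hasegawa--Shimura as a reformulation of Ogg's classical bound, and no argument is given beyond the citation. Your proposal is therefore not being compared against an existing proof in the paper, but it is in fact a faithful and correct reconstruction of Ogg's original argument.

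The two ingredients you isolate are exactly the ones Ogg uses. For the supersingular contribution, the key points---that every supersingular $j$-invariant lies in $\F_{p^2}$, that one may choose a model with $\pi_{p^2}=[-p]$ so every cyclic subgroup of order $N$ is Frobenius-stable, that the number of points over a fixed supersingular $E$ is the number of $\mathrm{Aut}(E)$-orbits of such subgroups, and that Burnside together with the Eichler mass formula $\sum_{E\ \mathrm{ss}}|\mathrm{Aut}(E)|^{-1}=(p-1)/24$ gives the bound $(p-1)\psi(N)/12$---are all correct and are precisely Ogg's reasoning. Your observation that $\pm 1$ act trivially on subgroups is what makes the Burnside inequality go the right way. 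For the cuspidal contribution, the $2^{\omega(N)}$ divisors $c\mid N$ with $\gcd(c,N/c)=1$ give the standard lower bound for $\Q$-rational cusps; since the cuspidal divisor is finite \'etale over $\Z[1/N]$, these reduce to distinct $\F_p$-points, and disjointness from the supersingular locus is clear.

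One cosmetic remark: the displayed inequality in the paper writes $\varphi(N)$ but then defines $\psi(N)$; you have correctly read this as $\psi(N)=N\prod_{r\mid N}(1+1/r)$, which is what is intended.
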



For the explicit computation of $|X_0(N)_{p}(\F_{p^2})|$, let us employ the zeta function of $X_0(N)$, for which we refer to \cite{Mi}.
Let $X$ be a complete nonsingular curve of genus $g$ over $\Q$.
For all but finitely many primes $p$, the reduction $X_p$ of $X$ modulo $p$ will be a complete nonsingular curve over $\F_p$.
We call the primes for which this is true the {\it good primes} for $X$ and we call the remainder the {\it bad primes}.
Suppose that $p$ is a good prime.
Furthermore, define $Z(X_p,t)$ to be the power series with rational coefficients such that
\begin{equation}\label{zeta1}
\log Z(X_p,t)=\sum_{n=1}^\infty |X_p(\F_{p^n})|\frac{t^n}{n}.
\end{equation}
Define $\zeta(X_p,s)=Z(X_p,p^{-s})$ and set
$$\zeta(X,s)=\prod_p\zeta_p(X,s),$$
where $\zeta_p(X,s)=\zeta(X_p,s)$ when $p$ is a good prime and it is as defined as in \cite{Se} when $p$ is a bad prime.
Then, we can write
\begin{equation}\label{zeta2}
\zeta(X,s)=\frac{\zeta(s)\zeta(s-1)}{L(X,s)},
\end{equation}
where $\zeta(s)$ is the Riemann zeta function and
$L(X,s)$ is the Hasse-Weil $L$-function of $X$.

On the other hand, consider a cusp form
\begin{equation}\label{cuspf}
f(\tau)=\sum_{n=1}^\infty a_n q^n\,\,(q=e^{2\pi i\tau})
\end{equation}
of weight 2 for $\Gamma_0(N)$, which is a normalized eigenform for all of the Hecke operators $T_p$ for primes $p$ with $p\nmid N$.
In addition, one can define the $L$-function
\begin{equation}\label{Lf}
L(f,s)=\sum_{n=1}^\infty\frac{a_n}{n^s}.
\end{equation}
The two series expressions of \eqref{cuspf},\eqref{Lf} are related by the Mellin transform and its inversion formula.
The $L$-function admits the following product formula:
\begin{equation}\label{l-function1}
L(f,s)=\prod_p(1-a_pp^{-s}+p^{1-2s})^{-1}.
\end{equation}

The Eichler-Shimura theorem implies the following result.

\begin{Thm}\cite{Mi}\label{l-function2} Let $\{f_1,\dots,f_g\}$ be a basis for the cusp forms of weight 2 for $\Gamma_0(N)$, chosen to be normalized eigenforms for all of the Hecke operators $T_p$ for primes $p$ with $p\nmid N$.
Then, apart from the factors corresponding to a finite number of primes, $L(X_0(N),s)$ is equal to the product of $L(f_i,s).$
\end{Thm}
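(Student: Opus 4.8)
The plan is to prove the stated equality factor by factor in the Euler product, comparing the local factors of $L(X_0(N),s)$ and of $\prod_{i=1}^g L(f_i,s)$ at each good prime $p$ (which necessarily satisfies $p\nmid N$); the phrase ``apart from the factors corresponding to a finite number of primes'' exactly allows me to ignore the bad primes. First I would recall how the local factor of $L(X_0(N),s)$ is encoded in the zeta function. For a good prime $p$, Weil's theorem on the zeta function of a curve over a finite field gives
$$Z(X_p,t)=\frac{P_p(t)}{(1-t)(1-pt)},\qquad P_p(t)=\det\bigl(1-t\,\mathrm{Frob}_p\mid T_\ell J\bigr),$$
where $J=J(X_0(N))$, $T_\ell J$ is its $\ell$-adic Tate module ($\ell\neq p$), and $P_p$ has degree $2g$ with $g=g_0(N)$. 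Substituting $t=p^{-s}$ into $\zeta(X_0(N)_p,s)=Z(X_p,p^{-s})$ and comparing with \eqref{zeta2} (whose $\zeta(s)\zeta(s-1)$ accounts precisely for the factor $(1-p^{-s})^{-1}(1-p^{1-s})^{-1}$) shows that the local factor of $L(X_0(N),s)$ at $p$ is $P_p(p^{-s})^{-1}$. On the other side, by \eqref{l-function1} the local factor of $\prod_i L(f_i,s)$ at $p\nmid N$ is $\prod_{i=1}^g\bigl(1-a_p^{(i)}p^{-s}+p^{1-2s}\bigr)^{-1}$, where $a_p^{(i)}$ is the $T_p$-eigenvalue of $f_i$. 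Since $p^{1-2s}=p\,t^2$ with $t=p^{-s}$, the whole theorem reduces to the single polynomial identity
$$P_p(t)=\prod_{i=1}^g\bigl(1-a_p^{(i)}t+p\,t^2\bigr)\qquad(p\nmid N).$$

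Next I would invoke the two Eichler--Shimura inputs. The Eichler--Shimura isomorphism identifies $T_\ell J\otimes\C$ (equivalently $H^1_{\mathrm{et}}(X_0(N)_{\bar\Q},\Q_\ell)\otimes\C$) with $S_2(\Gamma_0(N))\oplus\overline{S_2(\Gamma_0(N))}$ as a module over the Hecke algebra, so the $T_p$-eigensystems occurring on the $2g$-dimensional space are exactly those of the $f_i$. The decisive arithmetic input is the Eichler--Shimura congruence relation: for $p\nmid N$, the Hecke correspondence $T_p$ on $X_0(N)$ reduces modulo $p$ to $\mathrm{Frob}_p+\mathrm{Ver}_p$, the sum of relative Frobenius and its dual. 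Because $T_p$ and $\mathrm{Frob}_p$ commute (both being defined over $\Q$, hence over $\F_p$) and $\mathrm{Frob}_p\circ\mathrm{Ver}_p=[p]$, this translates on $T_\ell J$ into the operator identity
$$\mathrm{Frob}_p^2-T_p\,\mathrm{Frob}_p+p=0.$$

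Finally I would decompose $T_\ell J\otimes\overline{\Q}_\ell$ into its Hecke eigenspaces $V_i$, one for each eigensystem of $f_i$. By the Eichler--Shimura isomorphism each $V_i$ is two-dimensional, and since $\mathrm{Frob}_p$ commutes with the Hecke action it preserves $V_i$, on which $T_p$ acts by the scalar $a_p^{(i)}$. The relation above then reads $\mathrm{Frob}_p^2-a_p^{(i)}\mathrm{Frob}_p+p=0$ on the two-dimensional space $V_i$, so $\mathrm{Frob}_p$ has characteristic polynomial $X^2-a_p^{(i)}X+p$ there. Taking the product over $i=1,\dots,g$ gives the characteristic polynomial of $\mathrm{Frob}_p$ on all of $T_\ell J$, and reversing it yields $P_p(t)=\prod_i\bigl(1-a_p^{(i)}t+p\,t^2\bigr)$, which is precisely the required identity. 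Forming the Euler product over all good $p$ and using the definitions \eqref{zeta1}--\eqref{l-function1} then gives $L(X_0(N),s)=\prod_i L(f_i,s)$ up to the finitely many bad factors.

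I expect the Eichler--Shimura congruence relation to be the main obstacle, since the rest is linear algebra together with the unwinding of the zeta- and $L$-function definitions already recorded in the excerpt. Its proof is genuinely geometric: one uses the moduli interpretation of $X_0(N)$ and realizes $T_p$ through the two degeneracy maps parametrizing cyclic $p$-isogenies, and one must show that in characteristic $p$ every such isogeny factors through either the relative Frobenius or its dual, because the only order-$p$ subgroup schemes of an elliptic curve over $\bar\F_p$ are $\ker\mathrm{Frob}_p$ and $\ker\mathrm{Ver}_p$. Establishing good reduction of $X_0(N)$ at $p\nmid N$ and identifying the reduced correspondence as $\mathrm{Frob}_p+\mathrm{Ver}_p$ is the technical heart of the argument.
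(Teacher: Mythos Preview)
The paper does not prove this theorem: it is quoted with a citation to Milne's course notes \cite{Mi} and no argument is given in the paper itself, so there is no proof here to compare your proposal against.

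For what it is worth, your outline is the standard Eichler--Shimura argument and is essentially correct. Two points you glide over deserve a remark. First, from the operator identity $\mathrm{Frob}_p^{2}-a_p^{(i)}\mathrm{Frob}_p+p=0$ on a two-dimensional space $V_i$ it does not \emph{formally} follow that the characteristic polynomial of $\mathrm{Frob}_p$ on $V_i$ equals $X^{2}-a_p^{(i)}X+p$: a scalar operator satisfying the quadratic is not excluded by that relation alone. The missing input is $\det(\mathrm{Frob}_p\mid V_i)=p$, which comes from the Hecke-equivariant Weil pairing on $T_\ell J$ (Galois-twisted by the cyclotomic character); once the determinant is known to be $p$, comparison with Cayley--Hamilton forces the trace to be $a_p^{(i)}$. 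Second, distinct basis forms $f_i$ can share the same system of eigenvalues for all $T_p$ with $p\nmid N$ (oldforms attached to the same newform), so the Hecke-isotypic components need not be two-dimensional. The same pairing argument on a $2m$-dimensional isotypic block shows that the two roots $\alpha$ and $p/\alpha$ of $X^{2}-a_pX+p$ occur as Frobenius eigenvalues with equal multiplicity $m$, giving the factor $(1-a_p\,t+p\,t^{2})^{m}$ and hence the desired identity for $P_p(t)$.
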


For the reader's convenience, we provide lists of the integers $N$ for which $X_0(N)$ is rational, elliptic, hyperelliptic, bielliptic, or of gonality 3.

\begin{Thm}\label{list}\cite{B,Ha-Sh,O} The following holds:
\begin{enumerate}
\item[(a)] $X_0(N)$ is rational if and only if $N\in\{1-10, 12, 13, 16, 18, 25\}$.
\item[(b)] $X_0(N)$ is elliptic if and only if $N\in\{11, 14, 15, 17, 19, 20, 21, 24, 27$, $32, 36, 49\}$.
\item[(c)] $X_0(N)$ is hyperelliptic if and only if $N\in\{22, 23, 26, 28, 29, 30, 31, 33$, $35, 37, 39, 40, 41, 46, 47, 48, 50, 59, 71\}$.
\item[(d)] $X_0(N)$ is bielliptic if and only if $N\in\{22, 26, 28, 30, 33, 34, 35, 37, 38$, $39, 40, 42, 43, 44, 45, 48, 50, 51, 53, 54, 55, 56, 60, 61, 62, 63, 64, 65, 69, 72$, $75, 79, 81, 83, 89, 92, 94, 95, 101, 119, 131\}$.
\item[(e)] ${\rm Gon}(X_0(N))=3$ if and only if $N\in\{34, 38, 43, 44, 45, 53, 54, 61, 64, 81\}$.
\end{enumerate}
\end{Thm}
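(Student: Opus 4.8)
The plan is to treat this statement as a compilation of the classifications of Ogg, Bars, and Hasegawa--Shimura, and to describe the common engine---point counting over finite fields together with Castelnuovo's inequality---that drives all five parts. Parts (a) and (b) are purely numerical, since $X_0(N)$ is rational exactly when $g_0(N)=0$ and elliptic exactly when $g_0(N)=1$. Here I would start from the standard genus formula
\[
g_0(N)=1+\frac{\psi(N)}{12}-\frac{\nu_2(N)}{4}-\frac{\nu_3(N)}{3}-\frac{\nu_\infty(N)}{2},
\]
where $\psi(N)=N\prod_{p\mid N}(1+1/p)$ is the index and $\nu_2,\nu_3,\nu_\infty$ count the elliptic points of order $2$ and $3$ and the cusps. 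Since $\psi(N)\ge N$ while the correction terms grow far more slowly, $g_0(N)\to\infty$, so only finitely many $N$ satisfy $g_0(N)\le 1$; enumerating them produces the two finite lists.

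For the geometric parts (c), (d), (e) the first step is to reduce each property to a finite set of candidate levels by point counting. The uniform principle is that if $X_0(N)$ admits a degree-$d$ map over $\Q$ onto a curve $Y$, then for every good prime $p$ one has $|X_0(N)_p(\F_{p^2})|\le d\,|Y_p(\F_{p^2})|$. Taking $Y=\P^1$ with $|\P^1(\F_{p^2})|=p^2+1$ gives, for a hyperelliptic ($d=2$) or trigonal ($d=3$) curve, $|X_0(N)_p(\F_{p^2})|\le d(p^2+1)$; taking $Y$ an elliptic curve and using the Hasse bound $|Y_p(\F_{p^2})|\le(p+1)^2$ gives, for a bielliptic curve, $|X_0(N)_p(\F_{p^2})|\le 2(p+1)^2$. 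Confronting each inequality with Ogg's lower bound $L_p(N)=\tfrac{p-1}{12}\psi(N)+2^{\omega(N)}$ of Lemma \ref{Ogg} and choosing a small good prime (say $p=2$ when $N$ is odd and $p=3$ otherwise) yields an explicit upper bound for $\psi(N)$, hence for $N$, and cuts each problem down to a finite, checkable set.

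The decisive work is then to decide, for each surviving level, whether the property actually holds. On the positive side one must exhibit the relevant map: this is done by computing the genera of the Atkin--Lehner quotients $X_0(N)/w_Q$ (a double cover of a genus-$0$, resp.\ genus-$1$, quotient realizes hyperellipticity, resp.\ biellipticity), by using the degeneracy maps $X_0(N)\to X_0(M)$ onto rational levels $M\mid N$ to produce low-degree maps to $\P^1$, and, where necessary, by writing down explicit plane models. On the negative side Ogg's generic bound is usually too weak near the boundary, so I would replace it by the exact count supplied by the Eichler--Shimura relation: writing $L(X_0(N),s)=\prod_i L(f_i,s)$ as in Theorem \ref{l-function2} and \eqref{l-function1}, the Frobenius eigenvalues pair as $\alpha,\bar\alpha$ with $\alpha+\bar\alpha=a_{p}$ and $\alpha\bar\alpha=p$, so that
\[
|X_0(N)_p(\F_{p^2})|=p^2+1-\sum_i\bigl(a_{p,i}^2-2p\bigr),
\]
where the $a_{p,i}$ are the $p$-th Hecke eigenvalues of a newform basis. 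If for some good $p$ this value exceeds the relevant threshold $d(p^2+1)$ or $2(p+1)^2$, the corresponding map cannot exist and the level is excluded; Castelnuovo's inequality (Theorem \ref{castelnuovo}) supplies a complementary genus obstruction, while Theorem \ref{DF} confines the abelian subvarieties of $W_3$ to elliptic curves in the bielliptic analysis.

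The main obstacle is exactly this case-by-case decision on the finite lists, and within it two points demand care. First, the point-count inequalities can only forbid maps; to guarantee that a special involution (hyperelliptic or bielliptic) genuinely exists one must know it lies in the known automorphism group, which for all but a few exceptional levels is generated by the Atkin--Lehner involutions, and controlling $\mathrm{Aut}(X_0(N))$ for the exceptional levels is the delicate step in the arguments of Ogg and Bars. Second, for part (e) one must not merely produce a degree-$3$ map but also certify that the gonality is no smaller---that is, that the level is absent from the rational, elliptic, and hyperelliptic lists---so that the gonality equals $3$ exactly; the sharp Eichler--Shimura counts above are precisely what make these boundary exclusions rigorous.
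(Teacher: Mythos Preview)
The paper gives no proof of this theorem at all: it is stated purely as a citation of the classifications of Ogg, Bars, and Hasegawa--Shimura, prefaced by the phrase ``for the reader's convenience''. So there is nothing in the paper to compare your argument against; your outline is already far more than the paper attempts.

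Viewed on its own terms, your sketch is a fair summary of the methods in the cited works: the genus formula for (a) and (b); Ogg's point-counting bound combined with the identification of the hyperelliptic involution among the Atkin--Lehner involutions for (c); Bars' combination of point counting and Castelnuovo's inequality for (d); and Hasegawa--Shimura's mixture of point counting, explicit models, and canonical embeddings for (e). Two small corrections are worth making. First, your invocation of Theorem~\ref{DF} in the bielliptic analysis is misplaced: that result bounds abelian subvarieties of $W_d$ and is used in this paper to analyse cubic points, not to classify bielliptic levels; Bars' argument does not touch $W_3$ at all. Second, you should be explicit that the properties in (c)--(e) are geometric (over $\bar\Q$), whereas your point-counting inequality is stated for maps over~$\Q$; the passage from one to the other is not automatic and is handled in the cited papers by showing that the relevant map (hyperelliptic involution, bielliptic quotient, or $g^1_3$) can be taken to be defined over $\Q$, or else by invoking the fact that gonality does not increase under good reduction.
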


\section{Trielliptic curves over $\Q$}

If a curve $X$ of genus $g(X)>1$ admits a map to an elliptic curve of degree 3, we call $X$ {\it trielliptic}.
If such a degree 3 map can be defined over $\Q$, we call $X$ trielliptic over $\Q$.

Suppose that $X_0(N)$ has infinitely many cubic points, where $N$ is not contained in the lists (a)-(e) of Theorem \ref{list}.
Then, by Theorem 1 in \cite{A-H}, $X_0(N)$ admits a $\Q$-rational map to $\P^1$ of degree at most 3 or to an elliptic curve with positive $\Q$-rank.
From now on, the rank of an elliptic curve always mean the $\Q$-rank.
In fact, this map must be a map to an elliptic curve of degree 3, because the other possibilities are the curves listed in Theorem \ref{list}.
Thus, if $X_0(N)$ is not trielliptic and is not listed in Theorem \ref{list}, it can have only finitely many cubic points.

First, we will obtain a lower bound on the value of $N$ for which $X_0(N)$ is not trielliptic over $\Q$.

Let $E$ be an elliptic curve over a finite field $\F_{p^k}$.
The Hasse bound is
\begin{equation}\label{Hasse}
\big | |E(\F_{p^k})|-(p^k+1)\big |\leq 2\sqrt{p^k}.
\end{equation}

Suppose there exists a $\Q$-rational map $f:X_0(N)\to E$ of degree 3 and a prime $p\nmid N$.
Because $p\nmid N$, the curve $X_0(N)$ has good reduction at $p$.
Note that the conductor ${\rm Cond}(E)$ of $E$ divides $N$, hence $E$ also has good reduction at $p$.
Thus, $f$ induces a $\F_p$-rational map
$$\tilde f:X_0(N)_{p}\to E_p$$
of degree $3$, where $E_p$ is the reduction of $E$ at $p$ (\cite{N-Sa}).
By applying the Hasse bound, we obtain
$$|E_p(\F_{p^2})|\leq (p+1)^2,$$
and hence
\begin{equation*}
|X_0(N)_{p}(\F_{p^2})|\leq U_p(N):=3(p+1)^2.
\end{equation*}
By this bound and Lemma \ref{Ogg}, the following inequality for trielliptic curves $X_0(N)$ over $\Q$ holds for all $p\nmid N$:
\begin{equation}\label{ineq}
\frac{p-1}{12}\varphi(N)+2^{\omega(N)}\leq |X_0(N)_{p}(\F_{p^2})|\leq 3(p+1)^2.
\end{equation}

By a proof that is exactly the same as for Lemma 3.2 of \cite{Ha-Sh}, we have the following result.

\begin{Lem}\label{300} If $N\geq 300$, the curve $X_0(N)$ cannot be trielliptic over $\Q$.
\end{Lem}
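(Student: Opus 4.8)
The plan is to derive a contradiction from the inequality \eqref{ineq} by choosing a convenient prime $p$ and bounding $\varphi(N)$ (here $\varphi$ is the function $\psi$ of Lemma \ref{Ogg}, the index of $\Gamma_0(N)$, i.e.\ $\varphi(N)=\psi(N)=N\prod_{r\mid N}(1+1/r)$) from below in terms of $N$. First I would observe that for every $N$ one has $\psi(N)\geq N$, and more usefully that $2^{\omega(N)}\geq 1$, so \eqref{ineq} gives for any $p\nmid N$ the crude bound
\begin{equation*}
\frac{p-1}{12}\,N\leq \frac{p-1}{12}\,\psi(N)\leq 3(p+1)^2.
\end{equation*}
Thus $N\leq \dfrac{36(p+1)^2}{p-1}$ for every prime $p$ not dividing $N$.

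The heart of the argument is then to pick $p$ as small as possible subject to $p\nmid N$. If $N\geq 300$, then $N$ cannot be divisible by all of $2,3,5,7,11$ at once (their product already exceeds $300$), but more carefully: among the first few primes, at least one small prime fails to divide $N$ unless $N$ is itself very composite, and in that range the multiplicativity of $\psi$ forces $\psi(N)$ to be much larger than $N$, which more than compensates. Concretely, if $2\nmid N$ take $p=2$; if $3\nmid N$ take $p=3$; if $5\nmid N$ take $p=5$; and so on. For each such choice one computes $\dfrac{36(p+1)^2}{p-1}$ explicitly — for $p=2$ this is $324$, for $p=3$ it is $216$, for $p=5$ it is $162$, for $p=7$ it is $192$ — and checks that $N\geq 300$ violates the bound, except possibly in the single residual case where $N$ is divisible by $2,3$ and $5$. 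In that remaining case $\psi(N)\geq \psi(30)\cdot(N/30)=\tfrac{72}{30}N=\tfrac{12}{5}N$, so even with $p=7$ we get $\tfrac{6}{12}\cdot\tfrac{12}{5}N=\tfrac{6}{5}N\leq 3\cdot 64=192$, forcing $N\leq 160<300$, again a contradiction. (One also uses $2^{\omega(N)}\geq 8$ when $2\cdot3\cdot5\mid N$, which only strengthens the left side.)

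The main obstacle, and the place where one must be slightly careful, is the bookkeeping over which small prime to use as $p$: one needs to verify that in every case the combination of (i) the available prime $p\nmid N$ and (ii) the lower bound on $\psi(N)$ coming from the primes that \emph{do} divide $N$ together push the left-hand side of \eqref{ineq} above $3(p+1)^2$ whenever $N\geq 300$. This is exactly the computation carried out for the hyperelliptic bound in Lemma 3.2 of \cite{Ha-Sh}, the only change being the replacement of the right-hand bound coming from a degree-$2$ map to $\P^1$ (which there gives $U_p(N)=2(p+1)$) by the bound $U_p(N)=3(p+1)^2$ coming from a degree-$3$ map to an elliptic curve over $\F_{p^2}$; since the new bound is a genuine quadratic in $p$, the threshold $N\geq 300$ is the value for which the argument still closes. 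Hence I would simply state that the proof is identical to that of \cite[Lemma 3.2]{Ha-Sh} mutatis mutandis, and record the numerical threshold $300$.
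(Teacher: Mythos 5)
Your strategy is the paper's own: the paper proves this lemma simply by invoking the computation of \cite[Lemma 3.2]{Ha-Sh} with the upper bound $3(p+1)^2$, i.e.\ by playing \eqref{ineq} off against small primes $p\nmid N$, exactly as you propose. However, as written your execution does not close. The crude reduction $N\leq 36(p+1)^2/(p-1)$, obtained by discarding $2^{\omega(N)}$ and using only $\psi(N)\geq N$, fails precisely where you need it most: for $p=2$ it gives $N\leq 324$, which does not contradict $N\geq 300$ for odd $N$ in $[301,324]$. That case is genuinely tight for the bound $3(p+1)^2$: one must keep $2^{\omega(N)}\geq 2$ and use $\psi(N)\geq N+1$, giving $\psi(N)+12\cdot 2^{\omega(N)}\geq 302+24=326>324$, and only then does $p=2$ dispose of all odd $N\geq 300$. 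Next, your displayed values of $36(p+1)^2/(p-1)$ are miscomputed: the correct values are $288$ for $p=3$, $324$ for $p=5$, $384$ for $p=7$ (your $162$ at $p=5$ is in fact the bound after inserting $\psi(N)\geq 2N$ from $6\mid N$, which is fine but is not the formula you stated; $216$ and $192$ match neither). Finally, your ``single residual case'' $2\cdot 3\cdot 5\mid N$ is treated with $p=7$, which is illegitimate when $7\mid N$; the case $210\mid N$ (hence $N\geq 420$) needs its own prime, e.g.\ $p=11$ (and $p=13$ when $2310\mid N$), where it closes easily because $\psi(N)/N$ is then large.

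All of this is repairable, and the correct bookkeeping is: $p=2$ for $N$ odd (with $\psi(N)\geq N+1$ and $2^{\omega(N)}\geq 2$); $p=3$ when $2\mid N$, $3\nmid N$ (then $\psi(N)\geq \tfrac{3}{2}N$); $p=5$ when $6\mid N$, $5\nmid N$ ($\psi(N)\geq 2N$); $p=7$ when $30\mid N$, $7\nmid N$ ($\psi(N)\geq \tfrac{12}{5}N$, your final computation); and $p=11,13,\dots$ when yet more small primes divide $N$. So the idea is sound and matches the paper, but the proof as written has a hole at $p=2$ and an uncovered case $210\mid N$. A minor attribution point: \cite[Lemma 3.2]{Ha-Sh} concerns trigonal curves, so the bound being replaced there is $3(p^2+1)$ (degree-$3$ map to $\P^1$ over $\F_{p^2}$), not $2(p+1)$; the present lemma weakens it to $3(p+1)^2$, which is exactly why the tightness at $p=2$ must be rechecked rather than inherited.
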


Now, consider the integers $N<300$ that are not contained in any of the lists in Theorem \ref{list}.
Then, ${\rm Gon}(X_0(N))>3$.
We will prove that $X_0(N)$ has only finitely many cubic points.

For this purpose, we exclude many values of $N$ for which $X_0(N)$ cannot have infinitely many cubic points by using the methods described below.
First, we can exclude some $N$ by using \eqref{ineq}.

By Theorem \ref{DF}, $X_0(N)$ has infinitely many cubic points only if $W_3(X_0(N))$ contains a translate of an elliptic curve with positive rank.
Because such an elliptic curve has conductor dividing $N$, it should appear in the table of Cremona \cite{C}.
Thus, one can exclude the integers $N$ for which there exist no elliptic curve with positive rank whose conductor divides $N$.

Let $E$ be a strong Weil curve, and let $\varphi:X_0(N)\to E$ be its strong Weil parametrization.
If $E'$ is isogenous to $E$ over $\Q$ and a map $f:X_0(N)\to E'$ is given, then there exists an isogeny $g:E\to E'$ such that $f=g\circ\varphi$, and hence $\deg(\varphi)$ divides $\deg(f)$.
Thus, if $\deg(\varphi)>3$ for the strong Weil parametrizations $\varphi$ of all strong Weil curves with positive rank, then $X_0(N)$ does not admit a map of degree 3 to an elliptic curve with positive rank and conductor $N$.
One can find such strong Weil parametrizations and their degrees in Table 22 of \cite{C}.
Suppose that there is no elliptic curve of positive rank with conductor $M|N$ with $M<N$. Then, $X_0(N)$ cannot have infinitely many cubic points.

By using the three methods described above, one can exclude all values of $N$ except for the following:
\begin{equation}\label{numbers}
\begin{array}{ll}
N\in&\{74,86,106,111,114,116,122,129,158,159,\\
&164,166,171,172,185,215\}
\end{array}
\end{equation}

Suppose that $X_0(215)$ admits a $\Q$-rational map of degree 3 from $X_0(215)$ to an elliptic curve $E$.
Then ${\rm Cond}(E)$ divides $215$.
According to the table of Cremona \cite{C}, $E$ should be equal to one of 43A1 and 215A1.
Because the degree of the strong Weil parametrization of 215A1 is 8, there does not exist a map of degree 3 from $X_0(215)$ to 215A1 over $\Q$.
Suppose that there is a map of degree 3 from $X_0(215)$ to 43A1 over $\Q$.
One can easily compute that the number of $\F_{4}$-rational points of the reduction of $43A1$ at 2 is 5.
Thus, $|X_0(215)_2(\F_4)|\leq 15$, but $L_2(215)=26$ which is a contradiction.
Thus $X_0(215)$ is not trielliptic over $\Q$.
By the exact same method, one can exclude all of the numbers in \eqref{numbers} except for $N\in\{86,122,158,159\}$.

Consider the curve $X_0(159)$ of genus 17. 
Suppose that there exists a map of degree 3 from $X_0(159)$ to an elliptic curve $E$.
Note that the quotient space $X_0(159)/W_{159}$ of $X_0(159)$ by the full Atkin-Lehner involution $W_{159}$ is of genus 4.
Let $F$(resp. $F_1$, $F_2$) be the function field of $X_0(159)$(resp. $X_0(159)/W_{159}$, $E$). 
Then by applying Castelnuovo's inequality (Theorem \ref{castelnuovo}), one obtains a contradiction.
Thus, $X_0(159)$ is not trielliptic.
Using the same method, one can prove that $X_0(158)$ is not trielliptic.
For the curve $X_0(158)$ of genus 19, one can consider the quotient space $X_0(158)/W_{79}$ of $X_0(158)$ by the partial Atkin-Lehner involution $W_{79}$, whose genus is 5.

Finally we consider $X_0(86)$ and $X_0(122)$.
To deal with these curves, we explicitly compute $|X_0(86)_3(\F_{9})|$ and $|X_0(122)_3(\F_{9})|$.
First, let us consider $X_0(86)$ of genus $10$.
Suppose that $X_0(86)$ has infinitely many cubic points.
Then, there exists a map $X_0(86)\to E$ of degree 3 where $E$ is an elliptic curve of ${\rm rank}(E)>0$. 
Because there are no elliptic curves over $\Q$ of conductor $86$,
the elliptic curve $E$ is isomorphic to 43A1.
We compute that the number of $\F_{9}$-rational points of the reduction of $43A1$ at 3 is 12.
Thus, $|X_0(86)_3(\F_9)|\leq 36$.

From the Stein's modular form database \cite{St}, we can compute a basis $\{f_1,\dots,f_{10}\}$ for the cusp forms of weight 2 for $\Gamma_0(86)$, which consists of normalized eigenforms for the Hecke operators.
By Theorem \ref{l-function2}, \eqref{zeta2}, and \eqref{l-function1}, we obtain the equality
$$Z(X_0(86)_3,t)=\frac{1}{(1-t)(1-3t)}\prod_{i=1}^{10}(1-\alpha_it+3t^2),$$
where the $\alpha_i$ are eigenvalues of $T_3$.
Note that the $\alpha_i$ are the coefficients of $q^3$ in the $q$-expansions of the $f_i$.
Thus, we have that
\begin{align*}
Z(X_0(86)_3,t)=&(1+2t+3t^2)^2(1+4t^2+9t^4)^2(1-t+5t^2-3t^3+9t^4)\\
&(1+t+t^2+3t^3+9t^4)/(1-t)(1-3t).
\end{align*}
From \eqref{zeta1}, we obtain that
$$|X_0(86)_3(\F_9)|=\frac{d^2}{dt^2}Z(X_0(86)_3,t)|_{t=0}=40,$$
which is a contradiction.

Suppose that $X_0(122)$ has infinitely many cubic points.
Then, there exists a map of degree 3 from $X_0(122)$ to 61A1.
Note that the number of $\F_{9}$-rational points of the reduction of 61A1 at 3 is 12, hence $|X_0(122)_3(\F_9)|\leq 36$.
However, $|X_0(122)_3(\F_9)|=40,$ which is a contradiction.

By combing all of the results described above with Lemma \ref{300} and using Abramovich and Harris' result, we obtain the following result.

\begin{Lem}\label{lem1} If $N$ is not contained in any of the lists of numbers in Theorem \ref{list}, then $X_0(N)$ has only finitely many cubic points.
\end{Lem}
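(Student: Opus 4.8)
The plan is to bundle together everything established in this section into a single chain of deductions, so that the proof becomes bookkeeping once the individual obstructions are in place. Fix $N$ not contained in any of the lists (a)--(e) of Theorem \ref{list} and assume $X_0(N)$ has infinitely many cubic points. Since $X_0(N)$ carries the $\Q$-rational cusp, Theorem 1 of \cite{A-H} (valid for $d=3$) says $X_0(N)$ admits a $\Q$-rational map of degree at most $3$ onto $\P^1$ or onto an elliptic curve of positive rank. A map onto $\P^1$ would give ${\rm Gon}(X_0(N))\le 3$, and a map of degree $1$ or $2$ onto a positive-rank elliptic curve would make $X_0(N)$ rational, elliptic, or bielliptic; in every case Theorem \ref{list} would place $N$ in one of the excluded lists. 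Hence $X_0(N)$ is trielliptic over $\Q$, that is, there is a $\Q$-rational degree-$3$ map $f\colon X_0(N)\to E$ with ${\rm rank}(E)>0$, and by Theorem \ref{DF} such an $E$ is the only kind of positive-dimensional abelian variety that can sit inside $W_3(X_0(N))$. It therefore suffices to show that no $N$ outside Theorem \ref{list} yields such a situation.

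First I would apply Lemma \ref{300} to reduce to $N<300$. For the finitely many remaining $N$ not in Theorem \ref{list} I would run the three sieves developed above: (i) the numerical inequality \eqref{ineq}, tested at one or two small primes $p\nmid N$; (ii) Cremona's tables \cite{C}, discarding each $N$ for which no elliptic curve of positive rank has conductor dividing $N$ (legitimate because ${\rm Cond}(E)\mid N$); and (iii) the strong Weil parametrization degrees from Table 22 of \cite{C}, using that the strong Weil degree divides the degree of any $\Q$-rational map onto an isogenous curve, so that $N$ is discarded whenever all positive-rank strong Weil curves of conductor $N$ have parametrization degree $>3$ and no positive-rank curve has conductor a proper divisor of $N$. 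After these sieves only the sixteen values in \eqref{numbers} remain.

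Each of these sixteen is then treated individually. For twelve of them I would reduce modulo a small prime $p\nmid N$: a degree-$3$ map $f\colon X_0(N)\to E$ reduces to a degree-$3$ map $X_0(N)_p\to E_p$, whence $|X_0(N)_p(\F_{p^2})|\le 3\,|E_p(\F_{p^2})|$; computing the right-hand side for the finitely many candidate curves $E$ and checking it falls below the lower bound $L_p(N)$ of Lemma \ref{Ogg} gives a contradiction (exactly as in the worked case $N=215$). For $N=158$ and $N=159$ the only usable primes are too small for this, so instead I would pass to an Atkin-Lehner quotient $X_0(N)/W$ of known small genus, set $F_1=\Q(X_0(N)/W)$ and $F_2=\Q(E)$, and obtain a contradiction from Castelnuovo's inequality (Theorem \ref{castelnuovo}) applied to $F=F_1F_2=\Q(X_0(N))$.

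The genuinely delicate cases are $N=86$ and $N=122$, where neither the crude Hasse-bound count nor a Castelnuovo estimate is strong enough and one must compute $|X_0(N)_3(\F_9)|$ exactly. Starting from a basis of normalized Hecke eigenforms of weight $2$ for $\Gamma_0(N)$ drawn from \cite{St}, Theorem \ref{l-function2} together with \eqref{zeta2} and \eqref{l-function1} gives $Z(X_0(N)_3,t)=\frac{1}{(1-t)(1-3t)}\prod_i(1-\alpha_i t+3t^2)$, with the $\alpha_i$ the $q^3$-coefficients of the eigenforms; expanding this product and extracting $|X_0(N)_3(\F_9)|$ from the logarithmic identity \eqref{zeta1} yields $40$, which exceeds $3\cdot 12=36$ and rules out a degree-$3$ map to the unique conductor-compatible candidate ($43A1$ for $N=86$, $61A1$ for $N=122$). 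Assembling the twelve point-count exclusions, the two Castelnuovo exclusions, these two zeta-function exclusions, and Lemma \ref{300} covers every $N$ outside Theorem \ref{list}, proving the lemma. I expect this final step to be the main obstacle, since it is the only place where the lower bound of Lemma \ref{Ogg} does not already suffice and one is forced into an exact computation of the local zeta function.
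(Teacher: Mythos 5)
Your proposal is correct and follows essentially the same route as the paper: Abramovich--Harris plus Theorem \ref{list} to reduce to a degree-$3$ map onto a positive-rank elliptic curve, Lemma \ref{300} to bound $N<300$, the three sieves leaving the sixteen values of \eqref{numbers}, point-count contradictions via Lemma \ref{Ogg} and the Hasse bound for twelve of them, Castelnuovo's inequality with Atkin--Lehner quotients for $N=158,159$, and the exact zeta-function computation $|X_0(N)_3(\F_9)|=40>36$ for $N=86,122$. No gaps to report.
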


\section{Infinitely many cubic points}

In this section, we always assume that $N$ is one of the numbers given in Theorem \ref{list}.
First, we determine all of the trigonal curves $X_0(N)$ that admit maps of degree 3 to $\P^1$ over $\Q$. 
We call such curves {\it trigonal over} $\Q$.
Indeed, Hasegawa and Shimura \cite{Ha-Sh} proved that $X_0(N)$ is trigonal if and only if it is of $g_0(N)\leq 2$ or a non-hyperelliptic curve of $g_0(N)=3,4$.
Moreover, they determined the minimal degree of a number field over which there is a trigonal map $X_0(N)\to\P^1$ for $g_0(N)=3,4$.
All of the curves $X_0(N)$ with ${\rm Gon}(X_0(N))=3$ and $g_0(N)=3$, where $N\in\{34,43,45,64\}$, are trigonal over $\Q$.
On the other hand, among the curves $X_0(N)$ with ${\rm Gon}(X_0(N))=3$ and $g_0(N)=4$, only $X_0(54)$ and $X_0(81)$ are trigonal over $\Q$.

If $X_0(N)$ is rational, then it is obviously trigonal over $\Q$.
If $X_0(N)$ is elliptic, then one can obtain a trigonal map defined over $\Q$ by mapping to the $y$-coordinate in a Weierstrass equation.

Let us consider the (hyperelliptic) curves $X_0(N)$ of genus 2, i.e., $N\in\{22,23,26,28,29,31,37,50\}$.
According to Lemma 2.1 of \cite{JKS}, any curve of genus $2$ that has at least three $\Q$-rational points is trigonal over $\Q$.
If $N>1$ is not a power of a prime, then $X_0(N)$ has at least three $\Q$-rational cusps by Proposition 2 in \cite{O2}. 
Thus, $X_0(N)$ is trigonal over $\Q$ for $N=22,26,28,50$.
Note that for prime $N$, $X_0(N)$ has only two $\Q$-rational cusps, i.e., the cusps $0$ and $\infty$.
However, it is well-known that $X_0(37)$ has two non-cuspidal $\Q$-rational points (see \cite[Proposition 2]{MS}.), and hence $X_0(37)$ has four $\Q$-rational points. 
Thus, $X_0(37)$ is trigonal over $\Q$.

To deal with the remaining curves $X_0(N)$ of genus 2, we recall that a {\it hyperelliptic involution} $\nu$ of a hyperelliptic curve $X$ is an automorphism of $X$ of order 2 such that the quotient space $X/\langle\nu\rangle$ is rational.
Furthermore, $\nu$ is unique, hence it is contained in the center of the automorphism group of $X$ and defined over $\Q$.

\begin{Lem}\label{genus2} Let $X$ be a curve of genus 2 over a perfect field $k$, and let $\nu$ be a hyperelliptic involution of $X$.
If $X$ has two $k$-rational points $P_1$ and $P_2$ such that $\nu(P_1)=P_2$, then there exists a map $X\to \P^1$ of degree 3 that is defined over $k$.
\end{Lem}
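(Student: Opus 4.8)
The plan is to exhibit a base-point-free $g^1_3$ on $X$ that is defined over $k$ and to read off the desired morphism from it; the natural candidate is the complete linear system $|3P_1|$. First I would collect the standard facts about a genus-$2$ curve. Writing $K_X$ for a canonical divisor, $|K_X|$ is the unique $g^1_2$ on $X$, namely the hyperelliptic pencil, so $Q+\nu(Q)\sim K_X$ for every point $Q$; in particular $P_1+P_2\sim K_X$. By Riemann--Roch every divisor $D$ of degree $3$ has $\ell(D)=2$ (since $\deg(K_X-D)<0$), and every divisor $E$ of degree $2$ has $\ell(E)=1$ unless $E\sim K_X$, in which case $\ell(E)=2$; here $\ell(\,\cdot\,)$ denotes $\dim_k H^0$, which for a $k$-rational divisor agrees with the dimension computed over $\bar k$. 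Applying this to the $k$-rational effective divisor $D:=3P_1$, the system $|D|$ is a linear system of projective dimension $1$ defined over $k$.

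Next I would verify that $|3P_1|$ is base-point-free. Since $\nu(P_1)=P_2\neq P_1$, the point $P_1$ is not a Weierstrass point, so $2P_1\not\sim K_X$; hence $\ell(2P_1)=1$ and $|2P_1|=\{2P_1\}$. A point $Q$ of $X$ is a base point of $|3P_1|$ exactly when $\ell(3P_1-Q)=\ell(3P_1)=2$, and by the degree-$2$ dimension count this forces $3P_1-Q\sim K_X$. Combining with $K_X\sim P_1+P_2$ yields $2P_1\sim P_2+Q$, and since the only effective divisor linearly equivalent to $2P_1$ is $2P_1$ itself we get $P_2+Q=2P_1$ as divisors, hence $Q=P_2=P_1$, contradicting $P_1\neq P_2$. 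Thus $|3P_1|$ has no base point.

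Finally, a base-point-free linear system of projective dimension $1$ and degree $3$ defined over $k$ determines a morphism $\phi\colon X\to\P^1_k$ over $k$ with $\phi^{*}\mathcal O_{\P^1}(1)\cong\mathcal O_X(3P_1)$; as $\phi$ is then finite onto $\P^1$, one has $\deg\phi=\deg(3P_1)=3$, which is what we want. The one place where the argument uses the special geometry of genus-$2$ curves rather than formal Riemann--Roch is the base-point-freeness step: it relies on $|K_X|$ being the unique $g^1_2$ and on $|2P_1|$ collapsing to the single divisor $2P_1$ once $P_1$ is not a Weierstrass point. Everything else is routine, the crucial point being simply that the divisor $3P_1$ is manifestly rational over $k$ because $P_1$ is.
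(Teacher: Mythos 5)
Your proof is correct and follows essentially the same route as the paper: both extract the degree-3 map from the linear system $|3P_1|$, using $P_1+P_2\sim K_X$ together with $2P_1\not\sim K_X$ (equivalently $\ell(2P_1)=1$, since $P_1$ is not fixed by $\nu$) to guarantee the resulting map to $\P^1$ over $k$ has degree exactly $3$. The only cosmetic difference is that the paper obtains the canonical class $P_1+P_2$ by explicitly pulling back a function from the rational quotient $X/\langle\nu\rangle$ and phrases the conclusion as the existence of a function with an exact triple pole at $P_1$, whereas you cite the uniqueness of the $g^1_2$ and argue via base-point-freeness of the complete pencil.
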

\begin{proof} We employ the Riemann-Roch theorem over $k$.
For a divisor $D$ on $X$, let $\ell(D)$ denote the dimension of the Riemann-Roch space $\mathcal L(D)$.
Now consider the divisor $D=P_1+P_2$.
Because $\nu(P_1)=P_2$, there exists a function on the rational quotient $X/\langle\nu\rangle$ that induces a function $f$ on $X$ having simple poles only at $P_1$ and $P_2$, and no poles elsewhere. 
Hence $f\in \mathcal L(D)$.
Because $\mathcal L(D)$ contains a constant function, $\ell(D)=2$.
Thus, $D$ is a canonical divisor.
However, $2P_1$ cannot be a canonical divisor, because then $P_1-P_2$ would be a principal divisor, which is impossible on a curve of positive genus.
Thus, a non-constant function in $\mathcal L(3P_1)$ has a triple pole at $P_1$, which defines a trigonal map on $X$ over $k$.
\end{proof}

By Theorem 2 of \cite{O}, the hyperelliptic involutions of $X_0(N)$ for $N\in\{23,29,31\}$ are the full Atkin-Lehner involutions $W_N$, which map $0$ to $\infty$. 
By Lemma \ref{genus2}, the curves $X_0(N)$ for $N=23,29,31$ are trigonal over $\Q$.

Summarizing the above, all of the curves of $g_0(N)\leq 2$ are trigonal over $\Q$, and among the curves $X_0(N)$ with ${\rm Gon}(X_0(N))=3$, $X_0(N)$ is trigonal over $\Q$ only for $N\in\{34,43,45,54,64,81\}$.
Thus we obtain the following result.

\begin{Thm}\label{tri} The curves $X_0(N)$ are trigonal over $\Q$ if and only if $N\in\{1-29,31,32,34,36,37,43,45,49,50,54,64,81\}$.
\end{Thm}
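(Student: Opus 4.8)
The plan is to prove both directions by combining the Hasegawa--Shimura classification of geometrically trigonal modular curves with a handful of explicit degree-$3$ maps over $\Q$ in low genus, organized according to the value of $g_0(N)$.

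For the forward implication, suppose $X_0(N)$ is trigonal over $\Q$; then it is trigonal over $\bar\Q$, so by \cite{Ha-Sh} either $g_0(N)\le 2$ or $X_0(N)$ is non-hyperelliptic with $g_0(N)\in\{3,4\}$. The integers with $g_0(N)\le 2$ are precisely $\{1-29,31,32,36,37,49,50\}$, all of which lie in the claimed set; the non-hyperelliptic ones with $g_0(N)=3$ are $N\in\{34,43,45,64\}$, and those with $g_0(N)=4$ are $N\in\{38,44,53,54,61,81\}$ (these are the gonality-$3$ curves of Theorem \ref{list}(e), sorted by genus). So the forward direction reduces to deciding which of these last ten curves actually carry a trigonal map defined over $\Q$ — exactly the information Hasegawa and Shimura extract when they compute the minimal field of definition of a $g^1_3$ in genus $3$ and $4$ — and their answer retains $N\in\{34,43,45,64,54,81\}$ while discarding $N\in\{38,44,53,61\}$, producing the asserted list.

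For the reverse implication I would exhibit the maps case by case. If $g_0(N)=0$ then $X_0(N)\cong_{\Q}\P^1$ admits degree-$3$ self-maps over $\Q$; if $g_0(N)=1$, then since $X_0(N)$ carries a $\Q$-rational cusp one may fix a Weierstrass model over $\Q$, and the $y$-coordinate is a degree-$3$ map to $\P^1$ over $\Q$. For $g_0(N)=2$, i.e.\ $N\in\{22,23,26,28,29,31,37,50\}$, I would use two criteria: Lemma~2.1 of \cite{JKS}, that a genus-$2$ curve over $\Q$ with at least three $\Q$-rational points is trigonal over $\Q$, and Lemma \ref{genus2}, which handles two $\Q$-rational points interchanged by the hyperelliptic involution. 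For the non-prime-power values $N\in\{22,26,28,50\}$ the three $\Q$-rational cusps of \cite{O2} supply the first criterion; for $N=37$ the two rational cusps together with the two non-cuspidal rational points of \cite{MS} do the same; and for the primes $N\in\{23,29,31\}$ the hyperelliptic involution is $W_N$, which interchanges the cusps $0$ and $\infty$, so Lemma \ref{genus2} applies. Finally, when $g_0(N)=3$ and $N\in\{34,43,45,64\}$ the canonical model is a smooth plane quartic over $\Q$, and projection away from the $\Q$-rational image of a cusp is a degree-$3$ morphism to $\P^1$ defined over $\Q$, so these curves are trigonal over $\Q$ for free.

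The genuinely substantial point, which I expect to be the main obstacle, is the genus-$4$ half of the forward direction: showing that among $N\in\{38,44,53,54,61,81\}$ only $X_0(54)$ and $X_0(81)$ are trigonal over $\Q$. A canonical genus-$4$ curve lies on a unique quadric surface $Q\subset\P^3$, and its $g^1_3$'s are cut out by the ruling(s) of $Q$, so a $\Q$-rational trigonal map exists exactly when $Q$ is a cone defined over $\Q$ (rank $3$) or $Q$ is smooth and its two rulings are individually defined over $\Q$ (equivalently, the discriminant of $Q$ is a square in $\Q$). Carrying out this rank-and-discriminant analysis for each of the six curves, and in particular producing the $\Q$-rational $g^1_3$ on $X_0(54)$ and $X_0(81)$ while excluding it for $N=38,44,53,61$, is the computational heart of the matter; here I would quote the relevant computations of \cite{Ha-Sh} rather than redo them. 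Assembling the four cases then gives that $X_0(N)$ is trigonal over $\Q$ exactly for $N\in\{1-29,31,32,34,36,37,43,45,49,50,54,64,81\}$.
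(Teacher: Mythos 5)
Your proposal is correct and follows essentially the same route as the paper: the Hasegawa--Shimura classification together with their determination of the field of definition of the trigonal maps in genus $3$ and $4$, the $y$-coordinate map for elliptic $N$, and the identical genus-$2$ analysis via Lemma~2.1 of \cite{JKS} (for $N=22,26,28,50,37$) and Lemma~\ref{genus2} (for $N=23,29,31$). The only difference is cosmetic: for $N\in\{34,43,45,64\}$ you re-derive $\Q$-trigonality by projecting the plane quartic canonical model from a $\Q$-rational cusp, where the paper simply quotes \cite{Ha-Sh}; both are valid.
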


Suppose that $X_0(N)$ is not hyperelliptic and that $N$ is not in the list given in Theorem \ref{tri}.
Then, it admits no map to $\P^1$ of degree at most 3 over $\Q$, and hence it can only contain infinitely many cubic points when $W_3(X_0(N))$ contains a translate of an elliptic curve of positive rank.
From this fact, one can obtain the following result.

\begin{Lem}\label{lem2} The curve $X_0(N)$ has only finitely many cubic points for $N\in\{38,42,44,51,55,56,60,62,63,69,72,75,94,95,119\}$.
\end{Lem}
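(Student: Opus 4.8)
The plan is to show that for each of the fifteen listed moduli, $W_3(X_0(N))$ contains no $\Q$-rational translate of an elliptic curve of positive rank; by Frey's Proposition~1 in \cite{Fr}, Faltings' theorem \cite{Fa1}, and Theorem~\ref{DF} — exactly as in the paragraph preceding the statement — this forces $X_0(N)^{(3)}(\Q)$ to be finite, and hence $X_0(N)$ to have only finitely many cubic points. The hypotheses for that reduction are already in place: none of $38,42,44,51,55,56,60,62,63,69,72,75,94,95,119$ occurs in list~(c) of Theorem~\ref{list} or in the list of Theorem~\ref{tri}, so $X_0(N)$ is non-hyperelliptic and not trigonal over $\Q$, and each has genus $\ge 4$; consequently $X_0(N)$ admits no $\Q$-rational map of degree $\le 3$ to $\P^1$, and Theorem~\ref{DF} applies with $d=3$.

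Next I would extract the structural consequence. If $x+A\subseteq W_3(X_0(N))$ with $A$ an elliptic curve of positive rank, then $x+A$ is necessarily defined over $\Q$ (otherwise it would meet a Galois conjugate in a proper, hence finite, subscheme, contradicting the infinitude of $(x+A)(\Q)$), so $A$ is a one-dimensional abelian subvariety of $\mathrm{Jac}(X_0(N))$ over $\Q$. By the Eichler--Shimura theory (Theorem~\ref{l-function2}), $\mathrm{Jac}(X_0(N))$ is $\Q$-isogenous to a product of modular abelian varieties attached to newforms of level $M\mid N$, and its one-dimensional simple factors are precisely the isogeny classes of elliptic curves over $\Q$ of conductor dividing $N$; since rank is an isogeny invariant, $A$ would then be $\Q$-isogenous to such a curve of positive rank. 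Thus it suffices to check that no elliptic curve over $\Q$ of positive rank has conductor dividing any of the fifteen values of $N$.

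This I would verify directly from Cremona's tables \cite{C}: the smallest conductor supporting a curve of positive rank is $37$, and running through the divisors of each of the fifteen moduli one finds (as I expect) that every elliptic curve of conductor $M\mid N$ has rank $0$ — which finishes the proof. Should some modulus in fact carry a positive-rank elliptic curve of conductor $M\mid N$, I would fall back on the devices already used in Section~2: a $\Q$-rational map $X_0(N)\to E'$ of degree $\le 3$ to a curve $\Q$-isogenous to the strong Weil curve $E$ of that class factors through the degeneracy map $X_0(N)\to X_0(M)$ followed by the strong Weil parametrization $X_0(M)\to E$, so its degree is divisible by the product of those two degrees (read off from Table~22 of \cite{C}), which one checks exceeds $3$; alternatively, reducing modulo a good prime $p$ and combining Lemma~\ref{Ogg} with the Hasse bound yields $L_p(N)\le 3\,|E'_p(\F_{p^2})|$, a numerical inequality one can violate.

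The substantive point is the conceptual equivalence, for $d=3$, between having infinitely many cubic points and $W_3(X_0(N))$ containing a translate of a positive-rank elliptic curve (Abramovich--Harris \cite{A-H} together with \cite{D-F}), and this has already been set up. After that the only real work is the careful case-by-case bookkeeping of divisors and Cremona isogeny classes across the fifteen moduli, together with confirming the non-hyperelliptic and non-trigonal-over-$\Q$ status that licenses Frey's embedding; that bookkeeping, rather than any new idea, is where I expect the (entirely routine) effort to lie.
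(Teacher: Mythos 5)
Your proposal is correct and follows essentially the same route as the paper: since these $X_0(N)$ are non-hyperelliptic and not trigonal over $\Q$, infinitely many cubic points would force $W_3(X_0(N))$ to contain a translate of a positive-rank elliptic curve, which must have conductor dividing $N$, and Cremona's tables show no such curve exists for any of the fifteen values. The extra detail you supply (the Eichler--Shimura decomposition of $J_0(N)$ and the fallback via Weil-parametrization degrees or point counts) is consistent with the paper's Section 2 machinery but is not needed, exactly as you anticipated.
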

\begin{proof}
For these values of $N$, there are no elliptic curves with positive rank and conductor $M|N$.
\end{proof}

Consider the curve $X_0(92)$, which is bielliptic. 
There exists only one isogeny class that contains an elliptic curve with positive rank and conductor $92$, namely 92B1. 
There are no elliptic curves of positive rank with conductor $23$ or $46$. 
Because the degree of the strong Weil parametrization of 92B1 is 6, $X_0(92)$ admits no map of degree at most 3 to an elliptic curve of positive rank.
Thus, we have the following result:

\begin{Lem}\label{lem3} The curve $X_0(92)$ has only finitely many cubic points.
\end{Lem}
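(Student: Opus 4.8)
The plan is to rule out a degree-3 map from $X_0(92)$ to an elliptic curve of positive rank, which by the discussion preceding the lemma is the only route to infinitely many cubic points, since $X_0(92)$ is not trigonal over $\Q$ (it is not in the list of Theorem \ref{tri}). First I would recall the general mechanism from Section 1: as $X_0(92)$ admits no map of degree $\le 3$ to $\P^1$ over $\Q$, by Theorem \ref{DF} infinitely many cubic points force $W_3(X_0(92))$ to contain a translate of a positive-rank elliptic curve, and such an elliptic curve has conductor dividing $92$.

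Next I would enumerate, using Cremona's tables \cite{C}, all isogeny classes of elliptic curves over $\Q$ with conductor dividing $92$, i.e. conductor $1, 2, 4, 23, 46, 92$. The conductors $1,2,4$ support no elliptic curve at all; I would check that the classes of conductor $23$ and $46$ all have rank $0$, so they cannot contribute. That leaves conductor exactly $92$, where the only positive-rank isogeny class is $92B$; call its strong Weil curve $92B1$, with strong Weil parametrization $\varphi\colon X_0(92)\to 92B1$. Now I would invoke the factorization argument used repeatedly in Section 2: any $\Q$-rational map $f\colon X_0(92)\to E'$ with $E'$ in the isogeny class $92B$ factors as $f = g\circ\varphi$ for an isogeny $g\colon 92B1\to E'$, so $\deg\varphi \mid \deg f$. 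From Table 22 of \cite{C} one reads off $\deg\varphi = 6 > 3$, so no such $f$ can have degree $3$ (and none of smaller degree either). Combined with the exclusion of the other conductors, $X_0(92)$ admits no map of degree at most $3$ onto a positive-rank elliptic curve, hence only finitely many cubic points.

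I do not anticipate a genuine obstacle here: the argument is a direct instance of the three-method toolkit (Hasse-bound inequality, Cremona's rank data, strong Weil degree) already deployed in Section 2, applied to the single value $N=92$. The only points requiring care are bookkeeping ones: confirming that $92$ is \emph{not} in the list of Theorem \ref{tri} (so that the reduction to the trielliptic case is legitimate), confirming from \cite{C} that $23A$ and $46A$ (and any companions) have rank $0$, and reading the correct strong Weil degree $6$ for $92B1$ from Table 22 of \cite{C}. Since $\deg\varphi = 6$ already exceeds $3$, no auxiliary point-counting modulo a small prime is needed, in contrast to the borderline cases $N\in\{86,122,158,159,215\}$ treated earlier.
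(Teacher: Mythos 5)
Your proposal is correct and follows essentially the same route as the paper: both reduce to excluding a $\Q$-rational map of degree at most $3$ from $X_0(92)$ to a positive-rank elliptic curve, note that the only positive-rank isogeny class with conductor dividing $92$ is $92B$ (the classes of conductor $23$ and $46$ having rank $0$ and conductors $1,2,4$ supporting no elliptic curves), and conclude via the strong Weil parametrization of $92B1$ having degree $6>3$. The paper's proof is just a terser version of this same argument, so no further comparison is needed.
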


For the bielliptic curves $X_0(N)$ that admit a map of degree 2 to an elliptic curve with positive rank, we prove the following result.
\begin{Lem}\label{lem4} The curve $X_0(N)$ has only finitely many cubic points for $N\in\{53,61,65,79,83,89,101,131\}$.
\end{Lem}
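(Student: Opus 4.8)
The plan is to assume, for some $N$ in the list, that $X_0(N)$ has infinitely many cubic points, and to derive a contradiction. Since $N$ lies in none of the lists (a)--(c) of Theorem~\ref{list} and not in the list of Theorem~\ref{tri}, the curve $X_0(N)$ is neither rational, elliptic, hyperelliptic, nor trigonal over $\Q$; in particular it admits no $\Q$-rational map of degree at most $3$ onto $\P^1$. Hence, as in the discussion opening this section, $W_3(X_0(N))$ must contain a translate of an elliptic curve of positive rank defined over $\Q$, whose conductor divides $N$.

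The first step is to upgrade this to a genuine degree-$3$ map. The hypothesis supplies a $\Q$-rational degree-$2$ map $X_0(N)\to E$ with $E$ of positive rank, and this already produces a translate of $E$ inside $W_3(X_0(N))$, by pushing a fibre forward and adding a fixed rational cusp; but every member of that translate is a reducible divisor, so it contributes no cubic point. Indeed, a cubic point corresponds to an irreducible effective $\Q$-rational divisor $D$ of degree $3$, and $X_0(N)$ carries no $g^1_3$ defined over $\Q$, so $[D]\in W_3(X_0(N))\setminus W_3^1(X_0(N))$; thus $D$ is the only effective divisor in its linear equivalence class, and $[D]$ therefore does not lie on the image of the locus of reducible divisors. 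Consequently a translate responsible for infinitely many cubic points has irreducible general member, and --- by the argument underlying Theorem~1 of \cite{A-H} in degree $3$ --- such a translate yields a $\Q$-rational morphism of degree $3$ from $X_0(N)$ onto an elliptic curve $E'$ isogenous to $E$. Thus $X_0(N)$ would be trielliptic over $\Q$ via a positive-rank elliptic curve whose conductor divides $N$. I expect this reduction to be the main obstacle, since one must be sure that the ``free'' translate coming from the bielliptic structure cannot by itself create infinitely many cubic points.

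The second step rules this out by the modular-degree method of Lemmas~\ref{lem2} and \ref{lem3}. Consulting Cremona's tables \cite{C}, for each $N$ in the list there is a unique isogeny class of elliptic curves of conductor dividing $N$ that contains a curve of positive rank, namely the isogeny classes labelled $53A$, $61A$, $65A$, $79A$, $83A$, $89A$, $101A$, $131A$ respectively; for $N=65$ one uses in addition that there is no elliptic curve of conductor $1$, $5$ or $13$. The strong Weil parametrization $\varphi$ of the strong Weil curve in this class has degree $2$: any $\Q$-rational map from $X_0(N)$ to a curve isogenous to that strong Weil curve factors through $\varphi$, so $\deg\varphi$ divides the degree $2$ of the bielliptic map, while $\deg\varphi\neq 1$ because $g_0(N)\geq 2$. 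Hence every $\Q$-rational map from $X_0(N)$ onto a curve in this isogeny class has even degree; since this is the only positive-rank isogeny class of conductor dividing $N$, there is no $\Q$-rational degree-$3$ map from $X_0(N)$ to a positive-rank elliptic curve. This contradicts the first step, so $X_0(N)$ has only finitely many cubic points.
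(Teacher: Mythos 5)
Your proposal is correct and follows essentially the same route as the paper: rule out a $\Q$-rational degree-$3$ map to a positive-rank elliptic curve via the degree-$2$ strong Weil parametrization (noting the unique positive-rank isogeny class of conductor dividing $N$), and show that infinitely many cubic points would nevertheless force such a map because the corresponding classes are irreducible divisors with $h^0=1$, hence not accounted for by the bielliptic translate of $W_2$. The only difference is presentational: where you appeal to ``the argument underlying Theorem 1 of \cite{A-H}'', the paper spells this out by citing Faltings \cite{Fa2} to put infinitely many such classes in a single translate $x_1+E_1$ and then verifying the minimality hypothesis of Lemma 2 of \cite{A-H} (that $x_1+E_1$ lies in no shift $P+W_2$ for any point $P$, treated according to $\deg P$), which is exactly the content of your irreducibility observation.
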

\begin{proof} For such $N$, the curve $X_0(N)$ is a bielliptic curve that admits a strong Weil parametrization of degree 2 to an elliptic curve with positive rank and conductor $N$.
Define a map
$$\phi:X_0^{(3)}(N)\to J_0(N)$$
by $\phi(P_1,P_2,P_3)=[P_1+P_2+P_3-3\infty]$, where $J_0(N)$ is the Jacobian variety of $X_0(N)$, $\infty$ is an infinite cusp, and $[\,\,]$ denotes the divisor class.
Then, the image of $X_0^{(3)}(N)$ under $\phi$ can be identified with $W_3(X_0(N))$.
Because any $\Q$-rational map $X_0(N)\to E$ with ${\rm Cond}(E)=N$ factors through a strong Weil parametrization of degeee 2, $X_0(N)$ admits no map of degree 3 to an elliptic curve $E$ with ${\rm rank}(E)>0$ and ${\rm Con}(E)=N$.
Note that there is no elliptic curve $E$ with ${\rm rank}(E)>0$ such that ${\rm Con}(E)$ is a proper divisor of $N$.
Thus $X_0(N)$ admits no map of degree 3 to an elliptic curve with positive rank.
Now, suppose that $X_0(N)$ has infinitely many cubic points.
Then $W_3(X_0(N))(\Q)$ is infinite, and by \cite{Fa2} there are finitely many elements $x_1,\dots,x_n$ of $W_3(X_0(N))(\Q)$ such that
$$W_3(X_0(N))=\bigcup_{i=1}^n\left[x_i+E_i(\Q)\right],$$
where the $E_i$ are elliptic curves in $J_0(N)$.
Note that there exist infinitely many points in $W_3(X_0(N))(\Q)$ that cannot be obtained from a shift of $W_2(X_0(N))(\Q)$ by a rational point of $X_0(N)$.
Then, we can have an embedding of an elliptic curve in $W_3(X_0(N))$, say $E_1$, which maps to $x_1+E_1$ such that $x_1+E_1$ is not contained in the shift of $W_2(X_0(N))$ by any rational point of $X_0(N)$.
Thus, $x_1+E_1$ cannot be contained in the shift of $W_2(X_0(N))$ by any point of $X_0(N)$.
Suppose that there exists a point $P$ of $X_0(N)$ such that $x_1+E_1\subseteq P+W_2(X_0(N))$.
Viewing the elements of $W_3(X_0(N))$ as elements of $X_0^{(3)}(N)$, there exists an element $x=(P_1,P_2,P_3)\in x_1+E_1(\Q)$ such that $x=(P,Q_1,Q_2)$ for some $(Q_1,Q_2)\in W_2(X_0(N))$.
Because the $P_i$ are points of $X$ of degree at most $3$, so is $P$.
Because $P$ is not a rational point, $P$ is either of degree $2$ or $3$.
If $P$ is of degree $2$, then the rational points in $P+W_2(X_0(N))$ can be regarded as the elements of the shift of $W_2(X_0(N))$ by a rational point of $X_0(N)$, which is a contradiction.
If $P$ is of degree $3$, then $P+W_2(X_0(N))$ only contains finitely many rational points, because $P$ is fixed, which is again a contradiction.
Therefore, the embedding of $E_1$ into $W_3(X_0(N)$ satisfies the minimality condition given in \cite{A-H}.
By Lemma 2 of \cite{A-H}, $X_0(N)$ admits a map of degree 3 to $E_1$, which is a contradiction.
\end{proof}

Finally, we consider the hyperelliptic curves $X_0(N)$ of genus $g_0(N)\geq 3$.
Let $\nu$ be a hyperelliptic involution of $X_0(N)$, and
suppose that $X_0(N)$ has infinitely many cubic points.
Then, $X_0^{(3)}(N)$ contains infinitely many rational points.
For each $Q\in X_0(N)$, consider the subset of $X_0^{(3)}(N)$ defined by
$$\Gamma_Q(N):=\left\{(P,\nu(P),Q)\in X_0^{(3)}(N) \,|\,P\in X_0(N)\right\}.$$
Then, $\phi$ maps $\Gamma_Q(N)$ to a single point on $J_0(N)$, because any divisor $P+\nu(P)$ is linearly equivalent to $\infty+\nu(\infty)$. 
Now we set $$U(N):=X_0^{(3)}(N)\,\,\setminus\,\,\bigcup_{Q\in X_0(N)}\Gamma_Q(N).$$

\begin{Lem}\label{hyper1} Suppose that $X_0(N)$ is a hyperelliptic curve of genus $g_0(N)>2$.
Then, the restriction map 
$$\phi|_{U(N)}:U(N)\to J_0(N)$$
is injective.
\end{Lem}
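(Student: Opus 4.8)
The strategy is a proof by contradiction based on the Riemann--Roch theorem together with Castelnuovo's inequality (Theorem \ref{castelnuovo}). Suppose $D_1$ and $D_2$ are effective divisors of degree $3$ on $X_0(N)$, both lying in $U(N)$, with $\phi(D_1)=\phi(D_2)$; the goal is to conclude $D_1=D_2$. Since $\phi(D)=[D-3\infty]$, the hypothesis means that $D_1-D_2$ is principal, i.e. $D_1\sim D_2$. If $D_1\neq D_2$, then $D_1$ and $D_2$ are two distinct members of the complete linear system $|D_1|$, so $\dim|D_1|\geq 1$, equivalently $\ell(D_1)\geq 2$. It therefore suffices to show that on a hyperelliptic curve $X_0(N)$ of genus $g_0(N)>2$, every effective divisor $D$ of degree $3$ with $\ell(D)\geq 2$ has the form $D=P+\nu(P)+Q$ for some points $P,Q$: such a $D$, regarded as a point of $X_0^{(3)}(N)$, is the triple $(P,\nu(P),Q)\in\Gamma_Q(N)$, contradicting $D\in U(N)$.

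To establish this, I would decompose $|D|=F+|M|$ into its fixed part $F$ and base-point-free moving part $|M|$, so that $\deg F+\deg M=3$ and $\dim|M|=\dim|D|\geq 1$. The crucial step is to exclude $\deg M=3$, i.e. a base-point-free $g^1_3$ on $X_0(N)$: if such a pencil existed, the associated degree-$3$ morphism $X_0(N)\to\P^1$ and the hyperelliptic degree-$2$ morphism $X_0(N)\to\P^1$ would have coprime degrees, so the product map $X_0(N)\to\P^1\times\P^1$ would be birational onto its image and Castelnuovo's inequality would apply with $g_1=g_2=0$, $n_1=2$, $n_2=3$, giving $g_0(N)\leq(2-1)(3-1)=2$ and contradicting $g_0(N)>2$. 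One also excludes $\dim|M|\geq 2$: this would yield a base-point-free $g^r_d$ with $r\geq 2$ and $2\leq d\leq 3$ on $X_0(N)$, which is impossible for $g_0(N)>2$ (Clifford's theorem in the special case, Riemann--Roch otherwise).

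After these exclusions the only remaining possibility is $\deg F=1$, $\deg M=2$, $\dim|M|=1$, so $|M|$ is a base-point-free $g^1_2$. On a hyperelliptic curve of genus $>1$ the hyperelliptic pencil $|P+\nu(P)|$ is the unique such linear system; writing $F=Q$ and $D-F=P+\nu(P)$ gives $D=P+\nu(P)+Q$, the required contradiction. This completes the argument; note that forcing just one of $D_1,D_2$ out of $U(N)$ already yields injectivity on $U(N)$. The hypothesis $g_0(N)>2$ is used exactly twice: to obtain the Castelnuovo contradiction, and to guarantee uniqueness of the hyperelliptic $g^1_2$.

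The step I expect to require the most care is the exclusion of a base-point-free $g^1_3$ via Castelnuovo's inequality, where one must verify the non-factorization hypothesis of Theorem \ref{castelnuovo}, namely that $F$ is the compositum of the two rational subfields. This is precisely where the coprimality $\gcd(2,3)=1$ is used: any intermediate cover $X_0(N)\to X'\to\P^1$ through which both the degree-$2$ and degree-$3$ maps factored would have degree dividing both $2$ and $3$, hence be an isomorphism. The remaining ingredients — the fixed-part/moving-part bookkeeping, the exclusion of $\dim|M|\geq 2$, and the uniqueness of the hyperelliptic pencil — are standard.
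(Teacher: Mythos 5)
Your proof is correct, and its two key ingredients are the same ones the paper uses: the uniqueness of the hyperelliptic pencil (equivalently, of the involution $\nu$) and the fact that a hyperelliptic curve of genus $>2$ admits no degree-$3$ map to $\P^1$. The organization, however, is genuinely different. The paper argues directly with the points of the two triples: it splits into the case where the triples share a point (cancel it, get a function with a degree-$2$ pole divisor, invoke uniqueness of $\nu$ to conclude both triples lie in $\bigcup_Q\Gamma_Q(N)$) and the case of disjoint supports (the linear equivalence then produces a degree-$3$ function, declared impossible without naming Castelnuovo). You instead pass to the complete linear system $|D_1|$, decompose it into fixed and moving parts, rule out a base-point-free $g^1_3$ by an explicit Castelnuovo argument (including the verification, via coprimality of $2$ and $3$, that the compositum hypothesis of Theorem \ref{castelnuovo} holds), rule out $\dim|M|\geq 2$ by Clifford/Riemann--Roch, and identify the remaining $g^1_2$ with the hyperelliptic pencil to force $D_1\in\Gamma_Q(N)$. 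Your route is slightly more systematic and fills in the step the paper leaves implicit (why the degree-$3$ map is ``also impossible''), at the cost of the linear-system bookkeeping; the paper's version is more elementary and hands-on. One small omission: after excluding $\deg M=3$ and $\dim|M|\geq 2$ you should also dispose of $\deg M\leq 1$ (a moving part of degree $\leq 1$ with $\ell(M)\geq 2$ forces genus $0$), which is immediate but needed before asserting that $\deg F=1$, $\deg M=2$ is ``the only remaining possibility.''
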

\begin{proof} Suppose that $\phi(P_1,P_2,P_3)=\phi(Q_1,Q_2,Q_3)$ for two different points $(P_1,P_2,P_3),$ $(Q_1,Q_2,Q_3)\in U(N)$.
Then, $[P_1+P_2+P_3-3\infty]=[Q_1+Q_2+Q_3-3\infty]$, and hence the divisor $P_1+P_2+P_3$ is linearly equivalent to the divisor $Q_1+Q_2+Q_3$.
Now, we prove that $P_i\neq Q_j$ for $1\leq i,j\leq 3$.
First, We suppose that it is not true.
Without loss of generality, we may assume that $P_3=Q_3$. 
Then, the divisors $P_1+P_2$ and $Q_1+Q_2$ are linearly equivalent, and hence there is a non-constant function $f$ on $X_0(N)$ whose divisor is equal to $P_1+P_2-Q_1-Q_2$.
Because $f$ has a pole divisor of degree 2, it defines a hyperelliptic involution on $X_0(N)$ that is the same as $\nu$, because the hyperelliptic involution is unique.
Because $f(P_1)=f(P_2)=0$, it holds that $P_2=\nu(P_1)$.
Similarly, we have that $Q_2=\nu(Q_1)$.
Thus, $(P_1,P_2,P_3)$ and $(Q_1,Q_2,Q_3)$ are contained in $\bigcup_{Q\in X_0(N)}\Gamma_Q(N)$, which is impossible.
Thus, the linear equivalence of $P_1+P_2+P_3$ and $Q_1+Q_2+Q_3$ defines a map of degree 3 from $X_0(N)$ to $\P^1$, which is also impossible.
\end{proof}

\begin{Lem}\label{hyper2} Suppose that $X_0(N)$ is a hyperelliptic curve of genus $g_0(N)>2$, and that it admits infinitely many cubic points.
Then, the set $U(N)(\Q)$ of rational points is infinite.
\end{Lem}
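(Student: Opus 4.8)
The plan is to produce the required rational points of $U(N)$ directly, taking them to be the points of $X_0^{(3)}(N)$ attached to the cubic points of $X_0(N)$ themselves, rather than going through $W_3(X_0(N))$. First I would note that, because $g_0(N)>2$, the set $X_0(N)(\Q)$ is finite by Faltings, so the hypothesis that $X_0(N)$ has infinitely many cubic points supplies infinitely many closed points $P$ of $X_0(N)$ of degree exactly $3$. (This is the only tenable reading of the hypothesis: for a hyperelliptic $X_0(N)$ the set $X_0^{(3)}(N)(\Q)$ is already infinite on account of the hyperelliptic fibres, so the content of the hypothesis must be the infinitude of the set of degree-$3$ points, not of $X_0^{(3)}(N)(\Q)$.) Each such $P$ determines a $\Q$-rational point of $X_0^{(3)}(N)$, namely the effective divisor $\overline P_1+\overline P_2+\overline P_3$ of its three conjugates, which are pairwise distinct since we are in characteristic $0$; moreover distinct cubic points give distinct points of $X_0^{(3)}(N)(\Q)$. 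It therefore suffices to show that no such divisor lies in $\bigcup_{Q\in X_0(N)}\Gamma_Q(N)$, for then they all lie in $U(N)(\Q)$ and we are done.

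The heart of the matter is the formal claim that a divisor $\overline P_1+\overline P_2+\overline P_3$ coming from a cubic point $P$ can never be written as $R+\nu(R)+Q$ with $R,Q\in X_0(N)$; morally, an honest cubic point cannot be split off as a fibre of the hyperelliptic map together with one extra point. Assume such an equality held. Comparing two effective divisors of degree $3$ and using that $\overline P_1,\overline P_2,\overline P_3$ are pairwise distinct forces $R,\nu(R),Q$ to be pairwise distinct with $\{R,\nu(R),Q\}=\{\overline P_1,\overline P_2,\overline P_3\}$; in particular the degenerate shapes $R=\nu(R)$ (a Weierstrass point, giving $2R+Q$) and $R=Q$ (giving $2R+\nu(R)$) are excluded at once, since either would put a repeated point in $\overline P_1+\overline P_2+\overline P_3$. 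Relabelling, we may assume $R=\overline P_1$, $\nu(R)=\overline P_2$, $Q=\overline P_3$. Now $\nu$ is unique, hence defined over $\Q$ and commutes with $\mathrm{Gal}(\overline\Q/\Q)$; since the Galois group acts transitively on $\{\overline P_1,\overline P_2,\overline P_3\}$, this forces $\nu$ to permute that set, and being an involution that sends $\overline P_1$ to $\overline P_2$ it must act on it as the transposition $(\overline P_1\,\overline P_2)$, fixing $Q=\overline P_3$. But then Galois-equivariance together with transitivity gives that $\nu$ fixes every conjugate of $Q$, in particular $\overline P_1$, contradicting $\nu(\overline P_1)=\overline P_2$.

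I do not expect a genuine obstacle here. The reduction in the first paragraph is purely formal, and the Galois-permutation argument in the second is short; the only points requiring a little care are the bookkeeping in the divisor comparison (disposing of the two degenerate shapes) and being explicit that by a cubic point one means a point of degree exactly $3$. Note also that, by contrast with Lemma \ref{hyper1}, this statement uses the hypothesis $g_0(N)>2$ only through the finiteness of $X_0(N)(\Q)$.
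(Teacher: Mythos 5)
Your proposal is correct and follows essentially the same route as the paper: both take the $\Q$-rational point of $X_0^{(3)}(N)$ given by the Galois orbit of a degree-$3$ point and use the uniqueness (hence $\Q$-rationality and Galois-equivariance) of the hyperelliptic involution $\nu$ to show such a point cannot lie in any $\Gamma_Q(N)$. Your endgame is a permutation-style contradiction where the paper computes directly $P=\nu^2(P)=\sigma(\nu(P))=\sigma^2(P)$, and you make explicit (via Faltings) the step that infinitely many cubic points must have degree exactly $3$, which the paper leaves implicit; these are only cosmetic differences.
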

\begin{proof}
Suppose that $P$ is a cubic point on $X_0(N)$.
Then, the cubic field $K:=\Q(P)$ has three different embeddings into $\C$.
Considering the Galois closure $L$ of $K$ over $\Q$, one can choose the three embeddings as $\iota,\sigma$ and $\sigma^2$ for some $\sigma\in {\rm Gal}(L/K)$ of order 3 where $\iota$ is the identity. 
Then, $(P,\sigma P,\sigma^2 P)$ is a rational point in $X_0^{(3)}(N)$, and hence the set $X_0^{(3)}(N)(\Q)$ of rational points is infinite.
Now, we will show that $(P,\sigma P,\sigma^2 P)$ does not belong to $\Gamma_Q$ for any $Q\in X_0(N)$.
Suppose that $(P,\sigma P,\sigma^2 P)\in \Gamma_Q(N)$ for some $Q$.
Then, $(P,\sigma P,\sigma^2 P)=(P',\nu(P'),Q)$ for some $P'\in X_0(N)$, and $P'$ is equal to one of $P,\sigma P$ and $\sigma^2 P$.
Without loss of generality, we may assume that $P'=P$ and $\nu(P)=\sigma(P)$.
Then, $P=\nu^2(P)=\sigma(\nu(P))=\sigma^2(P)=Q$, and hence $\sigma$ is of order 2, which is a contradiction.
Thus, such a rational point is contained in $U(N)$.
\end{proof}

By Lemma \ref{hyper1} and Lemma \ref{hyper2}, one can conclude that $W_3(X_0(N))$ contains an elliptic curve with positive rank for hyperelliptic curves $X_0(N)$ of $g_0(N)>2$.
However, there are no elliptic curves with positive rank and conductor $M|N$.
Thus, we obtain the following result:
\begin{Lem}\label{lem5} The curve $X_0(N)$ has at most finitely many cubic points for $N\in\{30,33,35,39,40,41,46,47,48,59,71\}$.
\end{Lem}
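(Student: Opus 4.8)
The plan is to handle these eleven hyperelliptic curves of genus $g_0(N)>2$ exactly as the discussion immediately preceding the statement dictates. For each such $N\in\{30,33,35,39,40,41,46,47,48,59,71\}$, I would argue by contradiction: suppose $X_0(N)$ has infinitely many cubic points. First I would invoke Lemma~\ref{hyper2} to conclude that $U(N)(\Q)$ is infinite, and then Lemma~\ref{hyper1} to conclude that $\phi|_{U(N)}$ embeds $U(N)$ into $J_0(N)$, so that $\phi(U(N)(\Q))$ is an infinite set of rational points lying inside $W_3(X_0(N))$. Applying Faltings' theorem (as quoted via \cite{Fa2} in the proof of Lemma~\ref{lem4}), the Zariski closure of $W_3(X_0(N))(\Q)$ is a finite union of translates $x_i+A_i(\Q)$ of abelian subvarieties $A_i$ of $J_0(N)$; since the image is infinite, at least one $A_i$ is nonzero, and by Theorem~\ref{DF} (with $d=3$) any nonzero abelian variety inside $W_3(X_0(N))$ must be an elliptic curve. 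Hence $J_0(N)$ contains an elliptic curve $E$ with positive rank, and since $E$ is an isogeny factor of $J_0(N)$ its conductor divides $N$.

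The next step is the elimination: for each of the eleven values of $N$ I would consult Cremona's tables \cite{C} and check that there is no elliptic curve over $\Q$ of positive rank whose conductor divides $N$. This is a finite verification — one simply lists the divisors $M\mid N$ and checks the rank-zero status of every isogeny class of conductor $M$ — and it yields the desired contradiction, completing the proof that $X_0(N)$ has only finitely many cubic points. I would present this as a single sentence referencing the table, much as Lemma~\ref{lem2} and the bielliptic cases are dispatched.

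One subtlety I would want to make explicit is that the argument above produces an elliptic curve \emph{embedded} in $W_3(X_0(N))$ as a translate, not merely as an isogeny factor of the Jacobian; but the conductor-divides-$N$ conclusion only needs that $E$ is an isogeny factor of $J_0(N)$, which is automatic once $E\hookrightarrow J_0(N)$. I should also note that the hypothesis $g_0(N)>2$ is genuinely used: it is needed for Lemmas~\ref{hyper1} and~\ref{hyper2}, and it is why the genus-2 hyperelliptic curves $X_0(N)$ for $N\in\{22,23,26,28,29,31,37,50\}$ are treated separately (they are in fact trigonal over $\Q$, by Lemma~\ref{genus2} and Theorem~\ref{tri}).

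The main obstacle is not conceptual but bookkeeping: one must be sure that \emph{every} divisor $M$ of each of the eleven $N$'s has been checked in Cremona's tables, including small composite and prime divisors, so that no positive-rank elliptic curve of conductor dividing $N$ is overlooked. Given the small conductors involved (all $\le 71$), this is routine, and the relevant fact — that the first elliptic curve over $\Q$ of positive rank has conductor $37$, and that none of $\{30,33,35,39,40,41,46,47,48,59,71\}$ nor any of their proper divisors other than possibly $37$ supports such a curve, together with the observation that $37\nmid N$ for all eleven values — makes the check immediate.
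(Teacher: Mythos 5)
Your proposal is correct and follows essentially the same route as the paper: the paper also combines Lemma~\ref{hyper2} and Lemma~\ref{hyper1} to force a positive-rank elliptic curve (necessarily, by Theorem~\ref{DF} and Faltings) inside $W_3(X_0(N))$, and then concludes by noting that no elliptic curve of positive rank has conductor dividing any of the eleven values of $N$. You merely make explicit the Faltings/Theorem~\ref{DF} step and the conductor bookkeeping that the paper leaves implicit.
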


Therefore, our main theorem follows from Lemma \ref{lem1}, Theorem \ref{tri}, Lemma \ref{lem2}, Lemma \ref{lem3}, Lemma \ref{lem4}, and Lemma \ref{lem5}.\vspace{0.2cm}

\begin{center}
{\bf Acknowledgment}
\end{center}
This paper was written during the author's sabbatical leave at Brown University.
The author wishes to express his heartfelt thanks to Joseph Silverman for his kind and valuable comments, which helped in proving the results in this paper.
The author also wishes to thank Dan Abramovich and Andreas Schweizer for their comments regarding the proofs of Lemma \ref{lem4} and Lemma \ref{genus2}, respectively.
The author is grateful to the Department of Mathematics of Brown University for its support and hospitality.

\end{document}